\newcommand{\e}{\varepsilon}
\newcommand{\dist}{\text{dist}}
\newcommand\R{\mathbb{R}}
\newcommand\w{\omega}
\newcommand\g{\gamma}
\theoremstyle{plain}
\newtheorem{thrm}{Theorem}[section]
\newtheorem{lmm}[thrm]{Lemma}
\newtheorem{crllr}[thrm]{Corollary}
\newtheorem{prpstn}[thrm]{Proposition}
\theoremstyle{definition}
\newtheorem{dfntn}[thrm]{Definition}
\newtheorem{xmpl}[thrm]{Example}
\newtheorem{rmrk}[thrm]{Remark}
\begin{document}
\title{Motion of discrete interfaces in low-contrast random environments}
%
\author{Matthias Ruf}\address{Zentrum Mathematik - M7, Technische Universit\"at M\"unchen, Boltzmannstrasse 3, 85748 Garching, Germany}
\email{mruf@ma.tum.de}
\begin{abstract}  
We study the asymptotic behavior of a discrete-in-time minimizing movement scheme for square lattice interfaces when both the lattice spacing and the time step vanish. The motion is assumed to be driven by minimization of a weighted random perimeter functional with an additional deterministic dissipation term. We consider rectangular initial sets and lower order random perturbations of the perimeter functional. In case of stationary, $\alpha$-mixing perturbations we prove a stochastic homogenization result for the interface velocity. We also provide an example which indicates that stationary, ergodic perturbations do not yield a spatially homogenized limit velocity for this minimizing movement scheme.
\end{abstract}
\subjclass[2010]{53C44, 49J55, 49J45}
\keywords{Minimizing movement, discrete interface motion, crystalline curvature, stochastic homogenization}
\maketitle
\section*{Introduction}
In 1993 Almgren, Taylor and Wang introduced a notion of minimizing movements suitable to describe geometric motions of interfaces driven by curvature effects (see \cite{ATW93}). In a nutshell it can be summarized as follows: Given a fixed time step $\tau>0$ and an initial set $A_0\subset\R^d$, one constructs recursively a sequence of sets $\{A_k^{\tau}\}_k$ minimizing an energy functional of the form
\begin{equation}\label{introminmov}
A\mapsto E^{\tau}(A,A^{\tau}_{k-1})=\int_{\mathcal{F} A}\varphi(\nu(x))\,\mathrm{d}\mathcal{H}^{d-1}+\frac{1}{2\tau}\int_{A\Delta A^{\tau}_{k-1}}\dist(x,\partial A^{\tau}_{k-1})\,\mathrm{d}x,
\end{equation} 
where $\nu(x)$ is the normal vector at the point $x$ in the reduced boundary $\mathcal{F} A$ (we refer to \cite{AFP} for a precise definition) and $\varphi$ is a suitable surface density. The basic idea behind this approach is the following: While minimizing the surface functional shrinks the set, the bulk term forces the boundary of the minimizer to be close to the boundary of the previous set. Passing to the limit as $\tau\to 0$ for the piecewise constant interpolations one obtains a time dependent family $A(t)$ of sets that evolves by a weighted curvature (depending on $\varphi$), provided the initial set $A_0$ is regular enough and $\varphi$ is elliptic and smooth. In the isotropic case one obtains the well-known motion by mean curvature. This minimizing movement procedure was later on exported to random environments by Yip in \cite{yip} as follows: at each discrete time step, a minimizer of the energy in (\ref{introminmov}) is computed and then this set is perturbed by a random diffeomorphism.

In the recent paper \cite{BGN} Braides, Gelli and Novaga applied the above minimizing movement scheme within a deterministic, discrete environment. In this setting the environment is the scaled two-dimensional lattice $\e\mathbb{Z}^2$.  The surface term in (\ref{introminmov}) is replaced by a discrete interfacial energy which, in its simplest form, is derived from the classical nearest neighbor Ising model for spin systems and can be written formally as
\begin{equation}\label{introinteraction}
P_{\e}(u)=\frac{1}{4}\sum_{\substack{\e i,\e j\in\e\mathbb{Z}^2\\|i-j|=1}}\e|u(\e i)-u(\e j)|,
\end{equation}
where $u:\e\mathbb{Z}^2\to\{\pm 1\}$ is the spin variable. Note that the energy in (\ref{introinteraction}) takes into account only nearest neighbor interactions. Therefore it coincides with the perimeter of the set $\{u=+1\}$ and the relationship to the continuum model is given by identifying the spin variable with this level set. The distance-function in the bulk term in (\ref{introminmov}) is replaced by a discrete version of the $l^{\infty}$-distance to the boundary precisely defined in (\ref{distance}). From a physical point of view this setup can be seen as a simplified model to describe the motion of boundaries of the level sets of the spin variables, which represent the magnetic domain walls at the discrete level. Since the discrete perimeter inherits the anisotropy of the lattice, this minimizing movement scheme is related to crystalline motions, where $\varphi$ is not smooth (see \cite{AT95,BCCN,CMP} in the continuum case). Note that the continuum limit (or $\Gamma$-limit; see \cite{GCB}) of the energies in (\ref{introinteraction}) is given by the crystalline perimeter, that is 
\begin{equation}\label{crystalline}
P(u)=\int_{S_u}|\nu(x)|_1\,\mathrm{d}\mathcal{H}^1=\int_{\mathcal{F} \{u=1\}}|\nu(x)|_1\,\mathrm{d}\mathcal{H}^{1},
\end{equation}
where $|\nu|_1$ denotes the $l^1$-norm of $\nu$ (see \cite{ABC}). In \cite{BGN} the authors observed that the asymptotic behavior of the discrete flows depends heavily on the scaling between $\e,\tau$ when $\e,\tau\to 0$ simultaneously. When $\e/\tau\to 0$ fast enough the motion is governed by the $\Gamma$-limit, that means one obtains the continuum motion by crystalline curvature. If, on the other hand, $\e/\tau\to +\infty$ fast enough, the motions are pinned by the presence of many local minimizers in the discrete environment. This phenomenon is similar to any gradient flow that starts in a local minimum. We remark that in general such a priori results are abstract and the necessary speed of convergence/divergence might be unknown. We refer the reader to Chapter 8 in \cite{Locmin} for a parade of further examples on this issue. However the exact behavior was found in \cite{BGN}. The critical scaling for the discrete perimeter energies (\ref{introinteraction}) is $\e\sim\tau$, where pinning effects due do discreteness as well as a quantized crystalline motion can occur (see also Theorem \ref{bgn1} below).

In this paper we start studying the effect of a random discrete environment on the continuum limit flow. We take a different approach compared to \cite{yip} and associate the random effects directly to the lattice points. In \cite{ACR} Alicandro, Cicalese and the author performed a discrete-to-continuum analysis for a large class of ferromagnetic Ising-type energies including (\ref{introinteraction}) where the interacting particles are located at the points of a so-called stochastic lattice $\e\mathcal{L}(\w)$ instead of the periodic $\e\mathbb{Z}^2$ (or more generally $\e\mathbb{Z}^d$). In particular, assuming the stochastic lattice to be stationary with respect to translations, one can prove the existence of an homogeneous limit surface energy that turns out to be deterministic under some ergodicity assumptions. In that case the continuum limit takes the form
\begin{equation*}
P_{{\rm hom}}(u)=\int_{S_u}\varphi_{\rm hom}(\nu(x))\,\mathrm{d}\mathcal{H}^1.
\end{equation*}
Hence to continuum limit resembles the surface term in (\ref{introminmov}) even though in general $\varphi_{{\rm hom}}$ might be non-smooth. Our aim is to include dynamical effects in order to describe the curvature-driven motion of magnetic domain walls. The natural approach in the spirit of \cite{ACR,BCR} would be to replace the periodic lattice in the definition of $P_{\e}$ by a stationary random lattice $\e\mathcal{L}(\w)$ with suitable short-range interactions. This seems to be a very challenging problem. Thus we start with a much simpler model by adding very small random perturbations directly to the periodic lattice model, that means we will study the minimizing movement of a random discrete perimeter of the form
\begin{equation*}
P_{\e}^{\w}(u)=\frac{1}{4}\sum_{\substack{\e i,\e j\in\e\mathbb{Z}^2\\|i-j|=1}}\e(1+\e c_{ij}(\w))|u(\e i)-u(\e j)|.
\end{equation*}
For the precise assumptions on the random field $c_{ij}$ we refer to Section \ref{sec:prelim}. Note that in this scaling the random perturbations are are lower order term as they are scaled by $\e$. Nevertheless it turns out that they may influence at least the velocity of the limit motion. The reason why we don't let the bulk term be affected by the randomness as well, comes from the physical interpretation we give to this model as motion of aligned spins and differs from lattice particle models: While the interaction between particles may be affected by some random noise deriving from microscopic fluctuations, the energy to flip a spin should be constant, depending only on how many boundary layers are flipped in one time step. The interpretation of the bulk term in the energy in this setting is the following: Flipping the first layer of spins costs the least energy while the following layers are energetically more expensive. Of course this interpretation makes sense only if one can prove that in presence of randomness sets shrink by flipping spins close to the boundary by a certain number of layers. This is the case in the deterministic setting considered in \cite{BGN}.

For the sake of simplicity, we investigate the evolution when the initial set is a coordinate rectangle, that means a rectangle with all sides parallel to one of the coordinate axes. In Theorem \ref{motionthm1} we prove that under stationarity and quantified mixing assumptions as well as a suitable uniform bound on the random field $c_{ij}$, the limit motion law is deterministic and coincides with the quantized crystalline flow obtained in \cite{BGN}. This however depends strongly on the fact that the random field is stationary with respect to the translation group on $\mathbb{Z}^2$. In Section \ref{diffstatio} we show that the velocity changes if we restrict stationarity to a subgroup of the form $m\mathbb{Z}^2$ with $m\geq 2$. Anyhow, we stress that our results should be seen as a stability result of the deterministic problem rather than an exhaustive description of the possible effects of randomness on the limit flow. Indeed, randomness can influence the motion drastically. For example, in \cite{ACR} it is proven that when we replace the square lattice $\mathbb{Z}^2$ by a suitable isotropic stochastic lattice $\mathcal{L}(\w)$, then, up to a multiplicative constant, the discrete perimeters $\Gamma$-converge to the Euclidean instead of the crystalline perimeter. Thus, with an appropriate choice of discrete distance, one should not expect a crystalline motion anymore in the limit but rather some type of motion by mean curvature, at least if $\e<<\tau$ and the initial sets $A^{\e,\tau}_0$ converge to a smooth set.  
To highlight possible difficulties even in the present very weak random setting, we provide an example of stationary, ergodic perturbations that indicate strong non-uniqueness effects dropping the mixing hypothesis. In this case the functional describing the pointwise motion may not converge (Example \ref{nonconv}) so that the discrete velocity remains random, but still averaging over an increasing number of time steps one may obtain a homogenized limit velocity making further assumptions. Moreover, in Remark \ref{genericbound} we briefly discuss what might happen when we consider random fields $c_{ij}$ satisfying a generic $L^{\infty}$-bound.  

\section{Notation and preliminaries}\label{sec:prelim}
In this section we introduce our model and recall some definitions from probability theory as well as existing results in the deterministic setting.
\subsection{The random model}\label{sec:random-model}
First we set some notation. Given an interval $I$ and a function $f:I\to\mathbb{R}$ we set
\begin{equation*}
f^{(-)}(x)=\liminf_{y\to x}f(y),\quad f^{(+)}(x)=\limsup_{y\to x}f(y).
\end{equation*}
We set $Q_{\delta}(x)=x+[-\frac{\delta}{2},\frac{\delta}{2})^2$ as the half-open coordinate square centered at $x$ with side length $\delta$. For a real number $y\in\R$, we let $\lfloor y\rfloor$ be its integer part and $\lceil y\rceil:=\lfloor y\rfloor+1$. By $|\cdot|$ we denote the Euclidean norm on $\mathbb{R}^2$. If $B\subset\mathbb{R}^2$ is a Borel set we denote by $|B|$ its Lebesgue measure and by $\mathcal{H}^1(B)$ its $1$-dimensional Hausdorff measure.
Moreover, we set $\text{d}_{\mathcal{H}}(A,B)$ as the Hausdorff metric between two sets $A,B$. The symmetric difference of two sets $A,B$ is denoted by $A\Delta B$. We set $\mathds{1}_B$ as the characteristic function of $B$, and we denote by $\mathds{E}[X]$ the first moment of a random variable $X$. 

We now specify the framework for our model. Let $(\Omega,\mathcal{F},\mathbb{P})$ be a complete probability space. As pointed out in the introduction, we consider the easiest type of normalized ferromagnetic energies accounting only for nearest neighbor interactions. Given $\w\in\Omega$ and a function $u:\e\mathbb{Z}^2\to\{\pm 1\}$ we set
\begin{equation*}
P_{\e}^{\w}(u)=\frac{1}{4}\sum_{\substack{i,j\in\mathbb{Z}^2\\|i-j|=1}}\e\left(1+\e c_{ij}(\w)\right)|u(\e i)-u(\e j)|,
\end{equation*}
where the $c_{ij}:\Omega\rightarrow\R$ are uniformly bounded random variables satisfying a suitable $\alpha$-mixing assumption specified in (\ref{mixingdecay}). Note that without loss of generality we may assume that $c_{ij}=c_{ji}$ for all $|i-j|=1$. We define
\begin{equation*}
\mathcal{A}_{\e}:=\{A\subset\R^2:\;A=\bigcup_{i\in\mathcal{I}}Q_{\e}(i)\text{ for some }\mathcal{I}\subset\e\mathbb{Z}^2\}.
\end{equation*}
This class $\mathcal{A}_{\e}$ is closed under unions and intersections. Identifying a function $u:\e\mathbb{Z}^2\rightarrow\{\pm1\}$ with the set $A$ given by
\begin{equation*}
A:=\bigcup_{u(\e i)=+1} Q_{\e}(i)\in A_{\e},
\end{equation*}
we can interpret $P_{\e}^{\w}$ as a random perimeter defined on $\mathcal{A}_{\e}$ via $P_{\e}^{\w}(A):=P_{\e}^{\w}(u)$.
\\
If $\mathcal{Z}^2:=\{\xi=\frac{i+j}{2}:\;i,j\in\mathbb{Z}^2,\,|i-j|=1\}$ denotes the dual lattice of $\mathbb{Z}^2$, we can rewrite the random perimeter as a sum over points on the boundary $\partial A$ via
\begin{equation}\label{ondual}
P^{\w}_{\e}(A)=\sum_{\substack{\xi\in\mathcal{Z}^2\\\e\xi\in\partial A}}\e(1+\e c_{\xi}(\w)),
\end{equation}
where with a slight abuse of notation we set $c_{\xi}(\w):=c_{ij}(\w)$. From now on we assume the random variables to be indexed by the dual lattice. Given $A\in\mathcal{A}_{\e}$ it will be useful to define the properly scaled random perimeter also on portions of the boundary $\Gamma\subset\partial A$ setting
\begin{equation*}
p_{\e}^{\w}(\Gamma)=\sum_{\xi\in\mathcal{Z}^2:\;\e\xi\in\Gamma}\e c_{\xi}(\w).
\end{equation*}
With this notion, it holds that $P^{\w}_{\e}(A)=\mathcal{H}^1(\partial A)+\e p^{\w}_{\e}(\partial A)$.

In order to adapt the idea of Almgren, Taylor and Wang for studying curvature-driven motions, we have to define a suitable discrete distance between sets. As in \cite{BGN} we take a discrete version of the $l_{\infty}$-distance. To this end, first note that for every $x\in\R^2$ there exists a unique point $i\in\e\mathbb{Z}^2$ such that $x\in Q_{\e}(i)$. Given a set $A\subset\mathcal{A}_{\e}$ we define the value of the measurable function $d_{\infty}^{\e}(\cdot,\partial A):\R^2\rightarrow[0,+\infty)$ at $x\in Q_{\e}(i)$ by
\begin{equation} \label{distance}
d_{\infty}^{\e}(x,\partial A):=
\begin{cases}
\inf\{\|i-j\|_{\infty}:\;j\in\e\mathbb{Z}^2\backslash A\} &\mbox{if $i\in A$,}
\\
\inf\{\|i-j\|_{\infty}:\;j\in\e\mathbb{Z}^2\cap A\} &\mbox{if $i\notin A$.}
\end{cases}
\end{equation}
Observe that by definition $d_{\infty}^{\e}(x,\partial A)\in\e\mathbb{N}$.

Now we can define the total energy to be considered in the minimizing movement scheme. Given a mesh size $\e>0$, a time step $\tau>0$, sets $A, F\in\mathcal{A}_{\e}$ and $\w\in\Omega$ we set
\begin{equation*} 
E^{\w}_{\e,\tau}(A,F):=P^{\w}_{\e}(A)+\frac{1}{\tau}\int_{A\Delta F}d_{\infty}^{\e}(x,\partial F)\,\mathrm{d}x.
\end{equation*}
For a fixed (possibly random) initial set $A_{\e}^0(\w)$, we introduce the following discrete-in-time minimization scheme:
\begin{enumerate}
	\item[(i)] $A_{\e,\tau}^0(\w):=A_{\e}^0(\w),$
	\item[(ii)] $A_{\e,\tau}^{k+1}(\w)$ minimizes $A\mapsto E^{\w}_{\e,\tau}(A,A_{\e,\tau}^{k}(\w))$.
\end{enumerate}
Note that this procedure might not be unique. The discrete flat flow is defined as the piecewise constant interpolation
\begin{equation*}
A_{\e,\tau}(t)(\w):=A_{\e,\tau}^{\lfloor t/\tau\rfloor}(\w).
\end{equation*}
As a by-product of the analysis performed in \cite{BGN}, the most interesting regime is $\tau \sim \e$. Hence we assume for simplicity that 
\begin{equation*}
\tau=\g\,\e\quad\text{for some }\g>0
\end{equation*}
and omit the dependence on $\tau$ in the notation introduced above. For a complete analysis we have to require that the coefficient field satisfies the bound
\begin{equation}\label{pertubationsize}
\sup_{\xi}|c_{\xi}(\w)|<\frac{1}{4\gamma}\quad\mathbb{P}\text{-almost surely.}
\end{equation}
We remark that some of the results in this paper are valid for a generic $L^{\infty}$-bound but unfortunately these are not enough to characterize the motion.
\begin{rmrk}
	{\rm Using the boundedness of the random coefficients it is easy to see that $P^{\w}_{\e}(A)$ has the same $\Gamma$-limit in the $L^1$-topology as $P_{\e}$ defined in (\ref{introinteraction}), so that it converges to the crystalline perimeter (\ref{crystalline}).}	
\end{rmrk}

Now we introduce several stochastic properties of the random field $\{c_{\xi}\}_{\xi\in\mathcal{Z}^2}$. In general, given an indexed sequence $\{X_i\}_{i\in J}$ and $I\subset J$, we set $\mathcal{F}_{I}=\sigma\left(X_i:\;i\in I\right)$ as the $\sigma$-algebra generated by the random variables $\{X_i\}_{i\in I}$. We recall the following definitions from ergodic/probability theory:
\begin{dfntn}
	We say that a family of measurable functions $\{\tau_z\}_{z\in \mathbb{Z}^2},\tau_z:\Omega\to\Omega$, is an additive group action on $\Omega$ if
	\begin{equation*}
	\tau_{z_1+z_2}=\tau_{z_2}\circ\tau_{z_1}\quad\forall\, z_1,z_2\in\mathbb{Z}^2.
	\end{equation*} 
	Such an additive group action is called measure preserving if
	\begin{equation*}
	\mathbb{P}(\tau_z B)=\mathbb{P}(B)\quad \forall\, B\in\mathcal{F},\,z\in\mathbb{Z}^2.
	\end{equation*}
	Moreover $\{\tau_z\}_{z\in\mathbb{Z}^2}$ is called ergodic if, in addition, for all $B\in\mathcal{F}$ we have
	\begin{equation*}
	(\tau_z(B)=B\quad\forall\, z\in \mathbb{Z}^2)\quad\Rightarrow\quad\mathbb{P}(B)\in\{0,1\}.
	\end{equation*}
\end{dfntn}
We will need a quantitative version of Birkhoff's ergodic theorem that can be expressed through the notion of $\alpha$-mixing sequences.
\begin{dfntn}
	A sequence $\{X_j\}_{j\in\mathbb{N}}$ is said to be $\alpha$-mixing if there exists a sequence $\alpha(n)\to 0$ such that for all sets $I_1,I_2\subset\mathbb{N}$ with $\dist(I_1,I_2)\geq n$ it holds that
	\begin{equation*}
	\sup\{|\mathbb{P}(A\cap B)-\mathbb{P}(A)\mathbb{P}(B)|:\,A\in\mathcal{F}_{I_1},\,B\in\mathcal{F}_{I_2}\}\leq\alpha(n).
	\end{equation*}
\end{dfntn} 
Similar to independent random variables (which are $\alpha$-mixing with $\alpha(n)=0$), $\alpha$-mixing allows for quantitative estimates for the error probabilities in the law of large numbers. We will need the following polynomial decay theorem for bounded $\alpha$-mixing sequences, proved by Berbee in \cite{berbeemixing}.
\begin{thrm}\label{amixing}
	Let $p>1$ and $X_j$ be an $\alpha$-mixing sequence of random variables bounded by $1$ such that $\mathds{E}[X_j]=0$ for all $j$. If	
	\begin{equation*}
	\sum_{n\geq 1}n^{p-2}\alpha(n)<+\infty,
	\end{equation*}
	then, setting $S_k=\sum_{j=1}^{k}X_j$, for all $\delta>0$ it holds
	\begin{equation*}
	\sum_{n\geq 1}n^{p-2}\mathbb{P}\left(\sup_{k\geq n}|S_k/k|>\delta\right)<+\infty.
	\end{equation*}
\end{thrm}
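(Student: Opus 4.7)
The plan is to combine a Rosenthal-type moment inequality for $\alpha$-mixing partial sums with a dyadic maximal argument, then apply Markov's inequality and sum with weight $n^{p-2}$. Fix an integer $m$ with $m>p-1$, to be chosen at the end.

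First, I would establish a moment bound of the form $\mathds{E}[|S_k|^{2m}]\leq C_m k^m$. The standard route for bounded $\alpha$-mixing sequences is a blocking decomposition: partition $\{1,\ldots,k\}$ into alternating long blocks of length $L$ and short gaps of length $\ell$, use a Bradley-type coupling together with the mixing coefficient $\alpha(\ell)$ to replace the block sums by independent copies at a total-variation cost controlled by $\alpha(\ell)$, and apply the classical Rosenthal inequality to the independent approximation. A judicious choice of $L,\ell$ as functions of $k$ converts the hypothesis $\sum_{n}n^{p-2}\alpha(n)<+\infty$, together with the uniform boundedness $|X_j|\leq 1$ used to absorb leftover moments, into the announced polynomial bound.

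Second, I would upgrade this to the maximal version $\mathds{E}[\max_{j\leq k}|S_j|^{2m}]\leq C'_m k^m$ via a Menshov--Rademacher chaining argument: for $j\leq 2^N$, write $S_j$ as a telescoping sum of dyadic block sums and apply the triangle inequality in $L^{2m}$ combined with the bound of the previous step at each dyadic scale. Applying Markov's inequality then gives, for each $j\geq 0$,
\begin{equation*}
\mathbb{P}\Bigl(\max_{k\leq 2^{j+1}}|S_k|>\delta\, 2^j\Bigr)\leq\frac{C'_m\,2^{(j+1)m}}{\delta^{2m}\,2^{2jm}}=\frac{C''_m}{\delta^{2m}}\,2^{-jm}.
\end{equation*}
Splitting the event $\{\sup_{k\geq n}|S_k/k|>\delta\}$ into the union over dyadic ranges $2^j\leq k<2^{j+1}$ with $j\geq\lfloor\log_2 n\rfloor$ and summing the resulting geometric series yields
\begin{equation*}
\mathbb{P}\Bigl(\sup_{k\geq n}|S_k/k|>\delta\Bigr)\leq\frac{C(\delta,m)}{n^{m}}.
\end{equation*}

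Since $m>p-1$, the weighted series $\sum_n n^{p-2-m}$ converges, which gives the theorem. The main obstacle I expect is the very first step, namely extracting a moment bound of order $2m>2(p-1)$ from the single summability assumption $\sum_n n^{p-2}\alpha(n)<+\infty$; balancing the block/gap sizes so that the coupling error, the Rosenthal constant, and the boundedness of the $X_j$ combine to produce exactly $k^m$ (with no parasitic logarithmic or polynomial factors) requires a careful optimization and is the heart of Berbee's argument. The chaining, Markov and summation steps are then essentially routine bookkeeping.
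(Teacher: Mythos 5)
The paper does not prove this statement at all: it is quoted as an external result of Berbee \cite{berbeemixing}, so there is no internal argument to compare against, and your proposal must stand on its own. It does not, and the gap sits exactly where you flag it — but it is not a matter of ``careful optimization'': the moment bound your scheme requires is in general \emph{not a consequence} of the hypothesis. You need an integer $m>p-1$ with $\mathds{E}[|S_k|^{2m}]\leq C_mk^m$. Rosenthal-type inequalities for bounded $\alpha$-mixing sequences (Doukhan, Rio) require a mixing condition whose strength scales with the moment order, roughly $\sum_n n^{2m-2}\alpha(n)<+\infty$ to get the $2m$-th moment of order $k^m$; since $2m-2>2p-4\geq p-2$ for $p\geq 2$ and $2m-2\geq 0>p-2$ for $p<2$, this is strictly stronger than the assumed $\sum_n n^{p-2}\alpha(n)<+\infty$ in every case. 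Already for $1<p<2$, where $m=1$ would suffice, the bound $\mathrm{Var}(S_k)\leq Ck$ obtained from the covariance inequality $|\mathrm{Cov}(X_i,X_j)|\leq 4\alpha(|i-j|)$ needs $\sum_n\alpha(n)<+\infty$, which the hypothesis does not provide. A check from the other direction confirms the route is blocked: using only the moment order the hypothesis actually supports, namely $q=p$, Markov plus your dyadic argument yields $\mathbb{P}(\sup_{k\geq n}|S_k/k|>\delta)\lesssim n^{-p/2}$, and $\sum_n n^{p-2-p/2}$ diverges for every $p\geq 2$. So the fixed-high-moment strategy cannot close for any $p$, and the first step of the proposal asserts an implication that is false.

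The repair — and the heart of Berbee's actual argument — is to exploit the uniform bound $|X_j|\leq 1$ through an exponential rather than a polynomial tail estimate, with a block length that grows with $n$. Partition $\{1,\dots,n\}$ into about $n/q_n$ blocks of length $q_n$, replace the block sums by independent copies at a cost in probability of order $(n/q_n)\alpha(q_n)$ (via a Bradley/Berbee-type coupling or a direct correlation estimate on exponential moments), and apply Hoeffding's inequality to the independent bounded block sums, which contributes $\exp(-c\delta^2 n/q_n)$. Choosing $q_n\sim n/(K\log n)$ with $K$ large makes the exponential term summable against any polynomial weight, while the coupling cost, summed against $n^{p-2}$, reduces (up to logarithmic factors that must be absorbed using monotonicity of $\alpha$ or a finer choice of $q_n$) precisely to the assumed condition $\sum_n n^{p-2}\alpha(n)<+\infty$. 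Your chaining, Markov and summation bookkeeping is fine, but it must be fed by this exponential estimate; fed by a Rosenthal bound, the argument cannot be completed under the stated hypothesis.
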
 
For random fields $\{c_{\xi}\}_{\xi\in\mathcal{Z}^2}$ we have the probabilistic definitions below.
\begin{dfntn}
	Let $\{\tau_z\}_{z\in\mathbb{Z}^2}:\Omega\rightarrow\Omega$ be a measure preserving group action. We say that the random field $\{c_{\xi}\}_{\xi\in\mathcal{Z}^2}$ is
	\begin{enumerate}
		\item[(i)] \emph{stationary}, if $c_{\xi}(\tau_z\w)=c_{\xi+z}(\w)\quad\forall z\in\mathbb{Z}^2$;	
		\item[(ii)] \emph{ergodic}, if it is stationary and $\{\tau_z\}_z$ is ergodic.
		\item[(iii)] \emph{ strongly mixing (in the ergodic sense)}, if it is stationary and	
		\begin{equation*}
		\lim_{|z|\to +\infty}\mathbb{P}(A\cap(\tau_zB))=\mathbb{P}(A)\mathbb{P}(B)\quad\forall A,B\in\mathcal{F};
		\end{equation*}	
		\item[(iv)] \emph{$\alpha$-mixing}, if there exists a sequence $\alpha(n)\to 0$ such that for all sets $I_1,I_2\in\mathcal{Z}^2$ with $\dist(I_1,I_2)\geq n$ we have	
		\begin{align*}
		\sup\{\left|\mathbb{P}(A\cap B)-\mathbb{P}(A)\mathbb{P}(B)\right|:\;A\in\mathcal{F}_{I_1},\,B\in\mathcal{F}_{I_2}\}\leq\alpha(n).
		\end{align*}	
	\end{enumerate}
\end{dfntn}

While for static problems the above notions (i) and (ii) are often enough to prove stochastic homogenization results for variational models (see for example \cite{ACG2,ACR,BCR,BP}), in this minimizing movement setting we make use of mixing properties. More precisely, we require that the random field is $\alpha$-mixing with 
\begin{equation}\label{mixingdecay}
\sum_{n\geq 1}\alpha(n)<+\infty.
\end{equation}
There are stronger notions of mixing in the literature, however we prefer to chose $\alpha$-mixing with a certain decay rate of $\alpha(n)$ rather than some $\phi$-mixing condition since the generalization of $\phi$-mixing conditions to two-dimensional random fields is not trivial and many of them already imply a finite range dependence assumption (see \cite{onmixing}). Moreover, in general $\alpha$-mixing is much weaker than any kind of $\phi$-mixing.

\subsection{Results for deterministic models}
Let us collect some results obtained in the deterministic setting. Within a discrete, deterministic environment, the problem we are interested in has first been studied by Braides, Gelli and Novaga in \cite{BGN} in the case $c_{\xi}(\w)=0$. For coordinate rectangles as initial sets they prove the following:

\begin{thrm}[Braides, Gelli, Novaga]\label{bgn1}
	Let $A_{\e}^0\in\mathcal{A}_{\e}$ be a coordinate rectangle with sides $S_{1,\e},...,S_{4,\e}$. Assume that $A_{\e}^0$ converges in the Hausdorff metric to a coordinate rectangle $A$. Then, up to subsequences, $A_{\e}(t)$ converges locally in time to $A(t)$, where $A(t)$ is a coordinate rectangle with sides $S_i(t)$ such that $A(0)=A$ and any side $S_i$ moves inward with velocity $v_i(t)$ given by
	\begin{equation*}
	v_i(t)
	\begin{cases}
	=\frac{1}{\g}\Big\lfloor\frac{2\g}{L_i(t)}\Big\rfloor &\mbox{if $\frac{2\g}{L_i(t)}\notin\mathbb{N}$,}
	\\
	\\
	\in \frac{1}{\g}\Big[\left(\frac{2\g}{L_i(t)}-1\right),\frac{2\g}{L_i(t)}\Big] &\mbox{if $\frac{2\g}{L_i(t)}\in\mathbb{N}$,}
	\end{cases}
	\end{equation*}	
	where $L_i(t):=\mathcal{H}^1(S_i(t))$ denotes the length of the side $S_i(t)$, until the extinction time when $L_i(t)=0$.
	\\
	Assume in addition that the lengths $L_1^0,L_2^0$ of $A$ satisfy one of the three following conditions (assuming that $L^0_1\leq L^0_2)$:
	\begin{enumerate}
		\item[(i)] $L_1^0,L_2^0>2\g$ (total pinning),
		\item[(ii)] $L_1^0<2\g$ and $L_2^0\leq 2\g$ (vanishing in finite time with shrinking velocity larger than $1/\g$),
		\item[(iii)] $L_1^0<2\g$ such that $2\g/L_1^0\notin\mathbb{N}$ and $L_2^0>2\g$ (partial pinning),
	\end{enumerate}		
	then $A_{\e}(t)$ converges locally in time to $A(t)$ as $\e\to 0$, where $A(t)$ is the unique rectangle with side lengths $L_1(t)$ and $L_2(t)$ solving the following system of ordinary differential equations
	\begin{equation*}
	\begin{cases}
	\frac{d}{dt}L_1(t)=-\frac{2}{\g}\left\lfloor\frac{2\g}{L_2(t)}\right\rfloor,\\
	\\
	\frac{d}{dt}L_2(t)=-\frac{2}{\g}\left\lfloor\frac{2\g}{L_1(t)}\right\rfloor
	\end{cases}
	\end{equation*}
	for almost every $t$ with initial conditions $L_{1}(0)=L_{1}^0$ and $L_2(0)=L_2^0$.
\end{thrm}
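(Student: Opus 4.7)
The plan is to carry out each step of the discrete-in-time scheme explicitly: first I would show that when $F=A_{\e,\tau}^{k}$ is a coordinate rectangle, the minimizer of $A \mapsto E_{\e,\tau}(A,F)$ may be taken to be a coordinate sub-rectangle of $F$; then the energy decouples (up to corner corrections) into four independent one-dimensional integer optimizations, one per side, whose solution gives exactly the stated velocity formula. Finally I would pass to the limit $\e \to 0$ in the piecewise constant interpolations.

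\emph{Step 1 (geometric reduction).} I would first argue that any minimizer $A$ is contained in $F$: any cell of $A \setminus F$ pays a strictly positive bulk contribution (since $d_\infty^\e(\cdot,\partial F) > 0$ outside $\overline{F}$) and a crystalline rearrangement shows that replacing $A$ by $A \cap F$ does not increase the perimeter. To pass from $A \subset F$ to a rectangle I would use a slicing argument along each coordinate axis: for a fixed row $\{x_2 \in \e j + [-\e/2,\e/2)\}$, the slice $A \cap \{x_2 = \e j\}$ contributes $2\e$ to $P^{\w}_{\e}$ per connected component through its vertical boundary points, while the bulk integrand depends only on the horizontal distance to $\partial F$. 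Replacing the slice by a single interval aligned to one of the vertical sides of $F$ lowers the perimeter and, by monotonicity of $d_\infty^\e(\cdot,\partial F)$ in the horizontal variable, also lowers the bulk term. Iterating in the second direction reduces the minimization to coordinate sub-rectangles of $F$.

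\emph{Step 2 (one-dimensional minimization per side).} Once $A$ is a coordinate sub-rectangle, it is encoded by four integers $N_1,\dots,N_4 \in \mathbb{N}_0$ counting the layers peeled from each side of $F$. A direct calculation, separating the contribution of each side and ignoring the $O(\e^2)$ corner overlaps, gives
\begin{equation*}
E_{\e,\tau}(A,F)-E_{\e,\tau}(F,F) = \sum_{i=1}^{4}\left(-2\e N_i + \frac{L_i\,\e^2}{2\tau} N_i(N_i+1)\right) + O(\e^2).
\end{equation*}
Using $\tau = \g\e$, minimization in each $N_i$ reduces to comparing $f(N) = -2N + \tfrac{L_i}{2\g} N(N+1)$ at successive integers; the conditions $f(N_i) \leq f(N_i \pm 1)$ become $N_i \leq 2\g/L_i \leq N_i+1$, giving $N_i = \lfloor 2\g/L_i \rfloor$ when $2\g/L_i \notin \mathbb{N}$ and either value in $\{2\g/L_i - 1,\,2\g/L_i\}$ in the degenerate case. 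The inward velocity on side $i$ is then $v_i = N_i\e/\tau = N_i/\g$, matching the statement. The corner corrections and the fact that the $L_i$ themselves change from step to step must be tracked to confirm that the error vanishes uniformly on time intervals where $L_i(t)$ stays bounded away from $0$.

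\emph{Step 3 (ODE limit and uniqueness).} The discrete flat flow $t \mapsto A_{\e}(t)$ takes values in coordinate rectangles and is monotone non-increasing in $t$, so the four side lengths $L_i^{\e}(t)$ have uniformly bounded variation and, up to subsequences, converge in $L^1_{\rm loc}(0,+\infty)$. Passing to the limit in the relation $L_i^{\e}(t+\tau) - L_i^{\e}(t) = -2\e N_i^{\e}(t)$ yields, at every time where the floor is single-valued, the ODE $\dot L_i = -\tfrac{2}{\g}\lfloor 2\g/L_{i+1}\rfloor$. In case (i) both floors are $0$ and the flow is pinned; in cases (ii)--(iii) the right-hand side is piecewise constant in $t$ between the instants at which $2\g/L_i(t)$ crosses an integer threshold, so the ODE admits a unique piecewise affine solution, and the whole sequence converges. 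The ambiguous case $2\g/L_i \in \mathbb{N}$ is not encountered on a set of positive measure, because the velocity jumps strictly downward at such instants.

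\emph{Expected main obstacle.} The delicate point is Step~1: ruling out non-rectangular minimizers (with disconnected components, holes, or diagonal fingers) requires a careful discrete rearrangement that is compatible with both the anisotropic crystalline perimeter and the non-smooth bulk term $d_\infty^\e$. Once the shape is fixed, Step~2 is elementary discrete calculus and Step~3 is routine monotone compactness combined with piecewise linear ODE analysis.
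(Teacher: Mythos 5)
Your three-step architecture (minimizers stay coordinate rectangles inside the previous set; the energy decouples into four independent one-dimensional integer minimizations; pass to the limit in the piecewise constant interpolations) is exactly the scheme of \cite{BGN}. Note that the paper only quotes this theorem; what it actually proves is the random generalization, via Proposition \ref{stillrectangle}, the reduced functional (\ref{minvelocity}) and Theorem \ref{motionthm1}, and your Steps 2 and 3 match that treatment: the per-side polynomial $-2N+\frac{L_i}{2\g}N(N+1)$, the comparison at successive integers giving $N_i=\lfloor 2\g/L_i\rfloor$, and the compactness/ODE argument are all correct in substance.

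The genuine gap is in Step 1, and it is not merely the "delicate point" you flag but an argument that, as written, would fail. First, the bulk integrand does not depend only on the horizontal distance to $\partial F$: by (\ref{distance}) it is an $\ell^\infty$-distance, i.e.\ the minimum of the distances to all four sides, so a row-by-row analysis does not decouple it. Second, the proposed rearrangement points the wrong way: with $A\subset F$ the bulk term integrates $d_\infty^\e(\cdot,\partial F)$ over $F\setminus A$, and this weight is smallest near $\partial F$ and largest near the center of $F$; aligning the slice interval to a vertical side of $F$ pushes $F\setminus A$ into the high-weight central region and therefore \emph{increases} the bulk term (the true minimizer is centered, as the paper remarks just before introducing (\ref{minvelocity})). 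If your claimed inequality held, it would prove that minimizers are side-aligned, which is false. Third, even with the correct (centering) rearrangement, repositioning rows independently changes the horizontal part of the perimeter, which is controlled by the symmetric difference of the projections of adjacent rows, so the perimeter can increase and the coordinate-wise iteration does not close. Finally, disconnected components are never actually excluded. The way the paper closes these gaps in the more general Proposition \ref{stillrectangle} is: a comparison with the minimizer of a fast auxiliary flow $G_\e^\delta$ to show that any minimizer contains a large rectangle, the (reverse) isoperimetric inequality to kill further components, and a comparison with the minimal coordinate rectangle containing the minimizer together with the elementary estimate (\ref{measuregain}) to conclude rectangularity. Some version of these comparisons (all of which simplify when $c_\xi\equiv 0$) is needed to make your Step 1 sound.
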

It is the aim of this paper to extend these results to small random perturbations of the perimeter. While in \cite{BGN} more general classes of sets are studied, we restrict ourselves to rectangles as the analysis of these sets already contains the main features deriving from randomness. We mention that some effects of periodic perturbations have already been studied in \cite{BrSc,Scilla}. In \cite{BrSc} the authors treat the following type of high-contrast periodicity: Let $N_a,N_b\in\mathbb{N}$ and $N_{ab}=N_a+N_b$. The coefficients $c_{\xi}$ are $N_{ab}$ periodic and on the periodicity cell $0\leq\xi_1,\xi_2<N_{ab}$ they satisfy
\begin{equation*}
c_{\xi}=
\begin{cases}
b &\mbox{if $0\leq\xi_1,\xi_2\leq N_b$,}
\\
a &\mbox{otherwise,}
\end{cases}
\end{equation*}
with weights $a<b$. It is shown that minimizers avoid the $b$-interactions and thus the limit velocity does not depend on $b$ but only on the geometric proportions $N_a,N_b$ of the periodicity cell. It would be interesting to see how random interactions acting on this scale influence the minimizing sets, since without periodicity it might be impossible to take only $a$-interactions. However, in this paper we take the same scaling as the periodic perturbations considered in \cite{Scilla}. These are so called low-contrast perturbations since they vanish when $\e\to 0$. It is shown in \cite{Scilla} that the right scaling to obtain also $b$-interactions is $b-a \sim \e$. More precisely, one has to require that $|b-a|<\frac{\e}{2\gamma}$. Note that this bound agrees with (\ref{pertubationsize}). Hence with $a=1$ this model corresponds to a deterministic version of (\ref{ondual}). In this sense, up to the bound (\ref{pertubationsize}), our Theorem \ref{mainmstatio} generalizes the results of \cite{Scilla} to the most general periodic interactions as well as to the random case. While in the above deterministic setting coefficients with $|b-a|\geq\frac{\e}{2\gamma}$ lead to rectangular interfaces using only $a$-interactions, in the random case it is not clear what happens. We leave this issue as well as the high-contrast case open for future studies. For the interested reader we mention the recent papers \cite{BrCiYi,BrSo}, where the minimizing movements have been studied for other discrete surface-type models.

\section{Homogenized limit motion of a rectangle}
In the sequel we study the case, when the initial data $A_{\e}^0$ is a coordinate rectangle. We further assume for the rest of this paper that
\begin{equation}\label{boundedsetting}
\sup_{\e}\mathcal{H}^1(\partial A_{\e}^0(\w))=C<+\infty.
\end{equation}
This bound implies that any sequence chosen by the minimizing movement has equibounded perimeter. Indeed, by minimality we have
\begin{equation*}
P^{\w}_{\e}(A^{k+1}_{\e}(\w))\leq E^{\w}_{\e}(A_{\e}^{k+1}(\w),A_{\e}^k(\w))\leq E^{\w}_{\e}(A_{\e}^{k}(\w),A_{\e}^{k}(\w))=P^{\w}_{\e}(A_{\e}^{k}(\w)),
\end{equation*}
so that by induction and (\ref{pertubationsize}) we infer
\begin{equation}\label{equiperimeter}
\mathcal{H}^1(\partial A_{\e}^k(\w))\leq 2P^{\w}_{\e}(A_{\e}^k(\w))\leq 2P^{\w}_{\e}(A_{\e}^0(\w))\leq 4\mathcal{H}^1(\partial A_{\e}^0(\w)).
\end{equation}

\subsection{Qualitative behavior}
The main result of this section ensures that coordinate rectangles remain sets of the same type as long as its sides don't degenerate to a point. As we will see later, this is enough to derive the equation of motion at a fixed time $t$. The argument splits into two steps. First we prove that any minimizer must be connected and second, using (\ref{pertubationsize}), we conclude that this component has to be a coordinate rectangle. The idea to prove connectedness is as follows: First we compare the energy with a fast flow of a deterministic functional to conclude that the minimizer must contain a very large rectangle. Then the remaining components are ruled out using the isoperimetric inequality.

\begin{prpstn}\label{stillrectangle}
	Assume that $\{c_{\xi}\}_{\xi}$ fulfills (\ref{pertubationsize}). Let $\eta>0$ and suppose $A_{\e}^k(\w)$ is a coordinate rectangle which has all side lengths greater than $\eta$. Then, for $\e$ small enough, $A_{\e}^{k+1}(\w)$ is again a coordinate rectangle contained in $A_{\e}^k(\w)$.
\end{prpstn}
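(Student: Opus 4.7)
My plan follows the two-step outline given in the remark preceding the statement: (i) a \emph{fast flow} comparison with a deterministic functional puts a large inner rectangle inside any minimizer and rules out components outside $A_\e^k$, (ii) the discrete isoperimetric inequality discards additional small components, and (iii) the sharp perturbation bound (\ref{pertubationsize}) promotes the resulting connected set to a true coordinate rectangle. Set $A:=A_\e^{k+1}(\w)$. From (\ref{pertubationsize}) we immediately obtain the pointwise sandwich
\begin{equation*}
\left(1-\tfrac{\e}{4\g}\right)\mathcal{H}^1(\partial B)\leq P^\w_\e(B)\leq\left(1+\tfrac{\e}{4\g}\right)\mathcal{H}^1(\partial B)\qquad\forall B\in\mathcal{A}_\e,
\end{equation*}
so $E^\w_\e(\cdot,A_\e^k)$ is trapped between two deterministic functionals $E^\pm_\e$ to which Theorem~\ref{bgn1} applies. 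As a warm-up, any connected component of $A$ outside $A_\e^k$ can be removed to strictly reduce the energy (both the perimeter of the component and the bulk integral over it are positive), hence $A\subseteq A_\e^k$.

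The first substantive step is the fast flow comparison. Applying Theorem~\ref{bgn1} to $E^-_\e(\cdot,A_\e^k)$ identifies its minimizer $R^-\subseteq A_\e^k$ as a coordinate rectangle obtained from $A_\e^k$ by inward shrinkage of a bounded number $N(\g)$ of lattice layers, so for $\e$ small all sides of $R^-$ remain above $\eta/2$. Testing $A$ against the competitor $A_\e^k$ yields $(1/\tau)\int_{A\Delta A_\e^k}d^\e_\infty\,dx\leq P^\w_\e(A_\e^k)-P^\w_\e(A)$, and combining with the deterministic lower bound $E^-_\e(R^-,A_\e^k)\leq E^\w_\e(A,A_\e^k)$ sharpens this to $(1/\tau)\int_{A\Delta A_\e^k}d^\e_\infty\,dx=O(\e^2)$. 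Since $d^\e_\infty$ grows linearly with the distance to $\partial A_\e^k$, the symmetric difference $A_\e^k\setminus A$ is localized in a boundary strip of $O(\e)$-thickness, i.e.\ a bounded number of layers; in particular $A$ contains a coordinate rectangle $R\subseteq A_\e^k$ with sides at least $\eta/2$.

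For (ii), let $C$ be a connected component of $A$ disjoint from $R$. By the localization of the previous step, $C$ lies in the $O(\e)$-thick boundary strip and $|C|\leq C_1\e$. The discrete isoperimetric inequality gives $\mathcal{H}^1(\partial C)\geq c\sqrt{|C|}$, hence $P^\w_\e(C)\geq (1-\e/(4\g))c\sqrt{|C|}$. On the other hand $d^\e_\infty\leq C_0\e$ on $C$, so $(1/\tau)\int_C d^\e_\infty\,dx\leq C_0|C|\e/\tau=O(|C|)/\g$; for $|C|\leq C_1\e$ this is strictly smaller than the lower bound $P^\w_\e(C)\geq c''\sqrt{\e}$ when $\e$ is small. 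Thus removing $C$ decreases the energy, contradicting minimality, and $A=R$ is connected.

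Finally, for (iii), suppose the connected $A\subseteq A_\e^k$ is not a coordinate rectangle. Then $\partial A$ either contains a concave corner (an exterior cell with three lattice neighbors inside $A$) or a convex protrusion. Filling a concave corner flips a single spin: the deterministic perimeter drops by $2\e$, the bulk change is $-\e^2d^\e_\infty/\tau\leq 0$, and by (\ref{pertubationsize}) the random perimeter correction is bounded by $4\cdot\e\cdot\e/(4\g)=\e^2/\g<2\e$ for $\e<2\g$. A symmetric computation trims a convex protrusion, using that such a cell lies in the $O(\e)$ boundary strip from step~(i), where the bulk cost $\e^2d^\e_\infty/\tau=O(\e^2)$ is again dominated by the deterministic gain $2\e$. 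In either case there is a strict improvement, contradicting minimality, so $A$ must be a coordinate rectangle. I expect this last step to be the main obstacle: the constant $1/(4\g)$ in (\ref{pertubationsize}) is tight precisely because it matches the bulk coefficient $1/\tau=1/(\g\e)$, so it is exactly what guarantees that the deterministic gain of each BGN-type local move survives the worst-case random correction; as flagged in Remark~\ref{genericbound}, this condition cannot be relaxed without losing rectangularity of the minimizer.
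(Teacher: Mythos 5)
Your outline matches the paper's (it is the one announced before the statement), but both key technical steps have genuine gaps. First, the containment step. The ``fast flow'' is \emph{not} the deterministic envelope $E^-_\e$: its minimizer retreats by the same $O(1)$ number of layers as the true flow. The paper instead minimizes $G^{\delta}_{\e}(A,F)=\mathcal{H}^1(\partial A)+\frac{\delta}{\g\e}\int_{A\Delta F}d^{\e}_{\infty}\,\mathrm{d}x$ with $\delta\ll1$, whose minimizing rectangle $R^{\delta}_{\e}$ retreats $\sim\e/\delta$ layers, so that $d^{\e}_{\infty}(x,\partial A^k_{\e})\geq(\frac{\g}{C\delta}-1)\e$ on $R^{\delta}_{\e}$; combined with the submodularity inequality (\ref{minmax}) and the reverse isoperimetric inequality $\e\mathcal{H}^1(\partial B)\leq 4|B|$, the bulk penalty $\sim\delta^{-1}|R^{\delta}_{\e}\setminus A^{k+1}_{\e}|$ for digging into $R^{\delta}_{\e}$ beats the worst-case random perimeter gain $\sim\g^{-1}|R^{\delta}_{\e}\setminus A^{k+1}_{\e}|$ once $\delta$ is small. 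Your substitute deduction fails twice: comparing with $E^-_\e$ only shows $A$ is an $O(\e)$-near-minimizer of the deterministic energy, which does not yield $\frac1\tau\int_{A\Delta A^k_\e}d^{\e}_{\infty}=O(\e^2)$ (deterministic energy differences between competitors are themselves $O(\e)$); and even granting that bound, controlling the weighted volume $\int d^{\e}_{\infty}$ does not confine $A^k_\e\setminus A$ to an $O(\e)$-thick strip, since a small amount of mass at depth $M\e$ with $M\to\infty$ is compatible with it. Likewise your warm-up conclusion ``hence $A\subseteq A^k_\e$'' does not follow from deleting components lying entirely outside $A^k_\e$: a single component may straddle $\partial A^k_\e$, and passing to $A\cap A^k_\e$ requires the bond-cancellation analysis showing that only random interactions on $\partial(A\setminus A^k_\e)$ survive, controlled by (\ref{pertubationsize}) together with the reverse isoperimetric inequality.

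Second, the rectangularity step. Your local spin-flip argument does not cover all connected non-rectangles: an L-shaped or staircase set contained in $A^k_\e$ has no exterior cell with three neighbors in $A$ and no cell of $A$ with a single neighbor in $A$. At the inner corner of an L the exterior cell has exactly two neighbors in $A$, so filling it changes the deterministic perimeter by $0$, and the random correction (up to $\e^2/\g$ in absolute value) can cancel the bulk gain, leaving no strict improvement. The paper avoids this by a \emph{global} comparison with the minimal coordinate rectangle $R\supseteq A^{k+1}_\e$ (which satisfies $R\subseteq A^k_\e$): the geometric inequalities $\mathcal{H}^1(\partial A^{k+1}_{\e}\setminus\partial R)\geq\mathcal{H}^1(\partial R\setminus\partial A^{k+1}_{\e})$ and (\ref{measuregain}), i.e.\ $|R\setminus A^{k+1}_{\e}|\geq\frac{\e}{2}\mathcal{H}^1(\partial R\setminus\partial A^{k+1}_{\e})$, let the bulk gain $\g^{-1}|R\setminus A^{k+1}_{\e}|$ strictly dominate the $O(\e)$ random perimeter loss. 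You would need to either reproduce this global argument or substantially extend your list of admissible local moves.
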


\begin{proof}
	As explained above we divide the proof into two steps. As the arguments are purely deterministic we drop the $\w$-dependence of the sets.
	\\
	\textbf{Step 1} Connectedness of minimizers
	\\
	We consider the minimizing movement for an auxiliary deterministic functional that turns out to evolve faster. Given $0<\delta<<1$, we define
	\begin{equation*}
	G_{\e}^{\delta}(A,F):=\mathcal{H}^1(\partial A)+\frac{\delta}{\gamma\e}\int_{A\Delta F}d^{\e}_{\infty}(x,\partial F)\,\mathrm{d}x.
	\end{equation*}
	Observe that for any sets $A,B,F\in\mathcal{A}_{\e}$ we have the (in)equalities 
	\begin{align*}
	P_{\e}^{\w}(A\cup B)+P_{\e}^{\w}(A\cap B)\leq &P_{\e}^{\w}(A)+P_{\e}^{\w}(B),
	\\
	\int_{F\Delta (A\cap B)}d_{\infty}^{\e}(x,\partial F)\,\mathrm{d}x+\int_{F\Delta (A\cup B)}d_{\infty}^{\e}(x,\partial F)\,\mathrm{d}x=&\int_{F\Delta A}d_{\infty}^{\e}(x,\partial F)\,\mathrm{d}x
	+\int_{F\Delta  B}d_{\infty}^{\e}(x,\partial F)\,\mathrm{d}x.
	\end{align*}
	The inequality also holds for the standard perimeter, which implies the two general estimates
	\begin{equation}\label{minmax}
	\begin{split}
	&E^{\w}_{\e}(A\cap B,F)+E^{\w}_{\e}(A\cup B,F)\leq E^{\w}_{\e}(A,F)+E^{\w}_{\e}(B,F),
	\\
	&G^{\delta}_{\e}(A\cap B,F)+G^{\delta}_{\e}(A\cup B,F)\leq G^{\delta}_{\e}(A,F)+G^{\delta}_{\e}(B,F).
	\end{split}
	\end{equation}
	Now let $R_{\e}^{\delta}\in\mathcal{A}_{\e}$ be the smallest minimizer of $G^{\delta}_{\e}(\cdot,A_{\e}^k)$ with respect to set inclusion. This is well-defined due to (\ref{minmax}). From the analysis in \cite{BGN} we know that $R_{\e}^{\delta}\subset A^k_{\e}$ is a coordinate rectangle and, denoting by $N_{i,\e}$ the distance between corresponding sides of $R_{\e}^{\delta}$ and $A^k_{\e}$, for $\e$ small enough it holds that
	\begin{equation*}
	\left(\frac{2\gamma}{\delta L_{i,\e}}-1\right)\e\leq N_{i,\e}\leq \left(\frac{2\gamma}{\delta L_{i,\e}}+1\right)\e,
	\end{equation*}  
	where $L_{i,\e}$ denotes the length of the side $S_{i,\e}$ of $A^k_{\e}$. In particular, using (\ref{boundedsetting}), (\ref{equiperimeter}) and the assumptions on the sides of $A^k_{\e}$, we infer the two-sided bound
	\begin{equation}\label{auxvelocity}
	\left(\frac{\gamma}{C\delta}-1\right)\e\leq N_{i,\e}\leq\left(\frac{2\gamma}{\delta\eta}+1\right)\e.
	\end{equation}
	We argue that $R_{\e}^{\delta}\subset A^{k+1}_{\e}$. Assume by contradiction that $R_{\e}^{\delta}\backslash A^{k+1}_{\e}\neq \emptyset$. Since (\ref{auxvelocity}) implies that
	\begin{equation}\label{boundaryestimate}
	d_{\infty}^{\e}(x,\partial A^k_{\e})\geq  \left(\frac{\gamma}{C\delta}-1\right)\e\quad\forall x\in R_{\e}^{\delta},
	\end{equation} 
	using (\ref{minmax}) combined with the fact that both $A^{k+1}_{\e}$ and $R_{\e}^{\delta}$ are minimizers of the corresponding functionals, we obtain
	\begin{align*}
	0\geq &E^{\w}_{\e}(A^{k+1}_{\e},A^k_{\e})-E_{\e}^{\w}(R_{\e}^{\delta}\cup A^{k+1}_{\e},A_{\e}^k)\geq E^{\w}_{\e}(R_{\e}^{\delta}\cap A^{k+1}_{\e},A^k_{\e})-E_{\e}^{\w}(R_{\e}^{\delta},A_{\e}^k)
	\\
	= &\e \left(p_{\e}^{\w}(\partial(R_{\e}^{\delta}\cap A^{k+1}_{\e}))-p_{\e}^{\w}(\partial R_{\e}^{\delta})\right)+\frac{1-\delta}{\gamma\e}\int_{R_{\e}^{\delta}\backslash A^{k+1}_{\e}}d_{\infty}^{\e}(x,\partial A^k_{\e})\,\mathrm{d}x
	\\
	&+\mathcal{H}^1(\partial(R_{\e}^{\delta}\cap A^{k+1}_{\e} ))-\mathcal{H}^1(\partial R_{\e}^{\delta})+\frac{\delta}{\gamma\e}\int_{R^{\delta}_{\e}\backslash A^{k+1}_{\e}}d_{\infty}^{\e}(x,\partial A^k_{\e})\,\mathrm{d}x
	\\
	= &\e \left(p_{\e}^{\w}(\partial (R^{\delta}_{\e}\cap A^{k+1}_{\e}))-p_{\e}^{\w}(\partial R_{\e}^{\delta})\right)
	+\frac{1-\delta}{\gamma\e}\int_{R^{\delta}_{\e}\backslash A^{k+1}_{\e}}d_{\infty}^{\e}(x,\partial A^k_{\e})\,\mathrm{d}x
	\\
	&+G^{\delta}_{\e}(R^{\delta}_{\e}\cap A^{k+1}_{\e},A^k_{\e})-G_{\e}^{\delta}(R_{\e}^{\delta},A^k_{\e})
	\\
	\geq &\e \left(p_{\e}^{\w}(\partial (R^{\delta}_{\e}\cap A^{k+1}_{\e} ))-p_{\e}^{\w}(\partial R_{\e}^{\delta})\right)+\frac{1-\delta}{\gamma\e}\int_{R_{\e}^{\delta}\backslash A^{k+1}_{\e}}d_{\infty}^{\e}(x,\partial A^k_{\e})\,\mathrm{d}x,
	\end{align*}
	where we used several times that $R_{\e}^{\delta}\subset A^k_{\e}$ to simplify the symmetric differences. In combination with (\ref{boundaryestimate}), for $\delta\leq\frac{1}{2}$ the last estimate yields
	\begin{equation}\label{iso1}
	\left(\frac{1}{2C\delta}-\frac{1}{2\gamma}\right)|R^{\delta}_{\e}\backslash A_{\e}^{k+1}|\leq \e \Big(p_{\e}^{\w}(\partial R_{\e}^{\delta})- p_{\e}^{\w}(\partial (R^{\delta}_{\e}\cap A^{k+1}_{\e}))\Big).
	\end{equation} 
	In order to use this inequality, we need to analyze which boundary contributions cancel in the last difference. Given $\xi=\frac{i+j}{2}\in\mathcal{Z}^2$ we distinguish two exhaustive cases:
	\begin{itemize}
		\item [(i)] $i\in R_{\e}^{\delta},\,j\notin R_{\e}^{\delta}$: If $i\in A^{k+1}_{\e}$ we have $i\in R_{\e}^{\delta}\cap A^{k+1}_{\e}$ and $j\notin R_{\e}^{\delta}\cap A^{k+1}_{\e}$ which implies $\xi\in\partial(R_{\e}^{\delta}\cap A^{k+1}_{\e})$ and thus this contribution cancels. Otherwise $i\notin A^{k+1}_{\e}$ and consequently $\xi\in\partial (R_{\e}^{\delta}\backslash A^{k+1}_{\e})$;
		\item[(ii)]
		$i\in R_{\e}^{\delta}\cap A^{k+1}_{\e},\,j\notin R_{\e}^{\delta}\cap A^{k+1}_{\e}$: If $j\notin R_{\e}^{\delta}$, then $\xi\in\partial R_{\e}^{\delta} $ and the contribution cancels, while $j\in R_{\e}^{\delta}$ yields $j\notin A^{k+1}_{\e}$ and therefore $\xi\in\partial (R_{\e}^{\delta}\backslash A^{k+1}_{\e})$.
	\end{itemize}
	From those two cases and (\ref{pertubationsize}) we infer that 
	\begin{equation*}
	\e p_{\e}^{\w}(\partial R_{\e}^{\delta})-\e p_{\e}^{\w}(\partial (R^{\delta}_{\e}\cap A^{k+1}_{\e}))\leq \frac{1}{4\gamma}\e\mathcal{H}^1(\partial (R_{\e}^{\delta}\backslash A^{k+1}_{\e})).
	\end{equation*}
	Since for all sets $A\in\mathcal{A}_{\e}$ we have the reverse isoperimetric inequality $\e\mathcal{H}^1(\partial A)\leq 4|A|$, we can put together the last inequality and (\ref{iso1}) to deduce
	\begin{equation*}
	\left(\frac{1}{2C\delta}-\frac{1}{2\gamma}\right)|R_{\e}^{\delta}\backslash A_{\e}^{k+1}|\leq \frac{1}{\gamma}|R_{\e}^{\delta}\backslash A^{k+1}_{\e}|.
	\end{equation*}
	Choosing $\delta$ small enough this yields a contradiction. Hence we proved that $R_{\e}^{\delta}\subset A^{k+1}_{\e}$ for $\delta$ small enough.
	
	Next we rule out any other connected component except the one containing $R_{\e}^{\delta}$. Note that estimate (\ref{auxvelocity}) implies
	\begin{equation}\label{auxvelocity3}
	d_{\infty}^{\e}(x,\partial A^k_{\e})\leq (\frac{2\gamma}{\delta\eta}+1)\e\quad\quad\forall x\in A^k_{\e}\backslash R_{\e}^{\delta}.
	\end{equation}
	Consider a connected component $A$ of $A^{k+1}_{\e}$ not containing $R_{\e}^{\delta}$. We set $A^{\prime}=A_{\e}^{k+1}\backslash A$. Due to (\ref{boundedsetting}) and (\ref{auxvelocity3}) it holds that $|A^k_{\e}\cap A|\leq |A_{\e}^k\backslash R_{\e}^{\delta}|\leq C_{\delta,\eta}\e$. Hence, for $\e$ small enough,
	\begin{align*}
	E^{\w}_{\e}(A^{k+1}_{\e},A^k_{\e})-E^{\w}_{\e}(A^{\prime},A^k_{\e})\geq& (1-\frac{1}{4\gamma}\e)\mathcal{H}^1(\partial A)-\frac{1}{\gamma\e}\int_{A^{k}_{\e}\cap A}d_{\infty}^{\e}(x,\partial A^k_{\e})\,\mathrm{d}x
	\\
	\geq& \frac{\mathcal{H}^1(\partial A)}{2}-\Big(\frac{2}{\delta\eta}+\frac{1}{\gamma}\Big)|A^k_{\e}\cap A|\geq \frac{\mathcal{H}^1(\partial A)}{2}-|A^k_{\e}\cap A|^{\frac{1}{2}}
	\\
	\geq&\frac{\mathcal{H}^1(\partial A)}{2}-|A|^{\frac{1}{2}}\geq \frac{1}{2}(1-\frac{1}{\sqrt{\pi}})\mathcal{H}^1(\partial A)>0,
	\end{align*} 
	where we used the two-dimensional isoperimetric inequality. This contradicts the minimality of $A^{k+1}_{\e}$ and we conclude that $A^{k+1}_{\e}$ has exactly one connected component.
	\\
	\textbf{Step 2} Reduction to coordinate rectangles
	\\
	First note that if we replace an arbitrary set $A\in\mathcal{A}_{\e}$ by the set $A\cap A_{\e}^k$ we strictly reduce the energy if the sets are not equal. To see this, we observe that
	\begin{align}\label{cutboundary}
	E^{\w}_{\e}(A,A_{\e}^k)-E^{\w}_{\e}(A\cap A_{\e}^k,A_{\e}^k)\geq & \frac{1}{\g\,\e}\int_{A\backslash A_{\e}^k}d_{\infty}^{\e}(x,\partial A_{\e}^k)\,\mathrm{d}x+P_{\e}^{\w}(A)-P_{\e}^{\w}(A\cap A_{\e}^k)\nonumber
	\\
	\geq& \frac{|A\backslash A_{\e}^k|}{\g}+P_{\e}^{\w}(A)-P_{\e}^{\w}(A\cap A_{\e}^k).
	\end{align}
	Again we need to analyze which interactions cancel due to the random perimeter difference. As $A_{\e}^k$ is a coordinate rectangle, by elementary geometric considerations one can prove that $\mathcal{H}^1(\partial A)\geq \mathcal{H}^1(\partial(A\cap A_{\e}^k))$. On the other hand, reasoning similar to the lines succeeding (\ref{iso1}) one can show that all random interactions cancel except those coming from $\partial (A\backslash A_{\e}^k)$. In case this set is non-empty, by (\ref{pertubationsize}) we conclude that (\ref{cutboundary}) can be further estimated via the strict inequality
	\begin{align*}
	E^{\w}_{\e}(A,A_{\e}^k)-E^{\w}_{\e}(A\cap A_{\e}^k,A_{\e}^k)> &\frac{|A\backslash A_{\e}^k|}{\g}-\frac{1}{4\gamma}\e\mathcal{H}^{d-1}(\partial(A\backslash A_{\e}^k))\geq 0,
	\end{align*}
	where we used again the reverse isoperimetric inequality in $\mathcal{A}_{\e}$. Whenever $A$ is a minimizer we get a contradiction which shows that $\partial(A_{\e}^{k+1}\backslash A_{\e}^k)=\emptyset$, or equivalently $A_{\e}^{k+1}\subset A_{\e}^k$.
	
	To conclude we assume by contradiction that $A_{\e}^{k+1}$ is not a coordinate rectangle. Consider then the minimal coordinate rectangle $R$ containing $A_{\e}^{k+1}$ (see Figure \ref{minrectangle}). Then again by elementary geometric arguments it holds that $\mathcal{H}^1(\partial A_{\e}^{k+1}\backslash \partial R)\geq\mathcal{H}^{1}(\partial R\backslash\partial A_{\e}^{k+1})$. 
	As $R\subset A_{\e}^k$ by the previous argument, using (\ref{pertubationsize}) the difference of the energies can be estimated by
	\begin{align}\label{rectangle}
	0\geq& E^{\w}_{\e}(A_{\e}^{k+1},A_{\e}^k)-E^{\w}_{\e}(R,A_{\e}^k)\geq
	\frac{1}{\g}|R\backslash A_{\e}^{k+1}|+P_{\e}^{\w}(A_{\e}^{k+1})-P_{\e}^{\w}(R)\nonumber
	\\
	>&\frac{1}{\g}|R\backslash A_{\e}^{k+1}|+(1-\frac{1}{4\gamma}\e)\mathcal{H}^1(\partial A_{\e}^{k+1}\backslash \partial R)-(1+\frac{1}{4\gamma}\e)\mathcal{H}^1(\partial R\backslash \partial A_{\e}^{k+1})\nonumber
	\\
	\geq&\frac{1}{\g}|R\backslash A_{\e}^{k+1}|-\frac{1}{2\gamma}\e\mathcal{H}^1(\partial R\backslash \partial A_{\e}^{k+1}) .
	\end{align} 
	\begin{figure}[h]
		\centerline{\includegraphics [trim=7cm 9cm 5cm 7cm, scale=0.3]{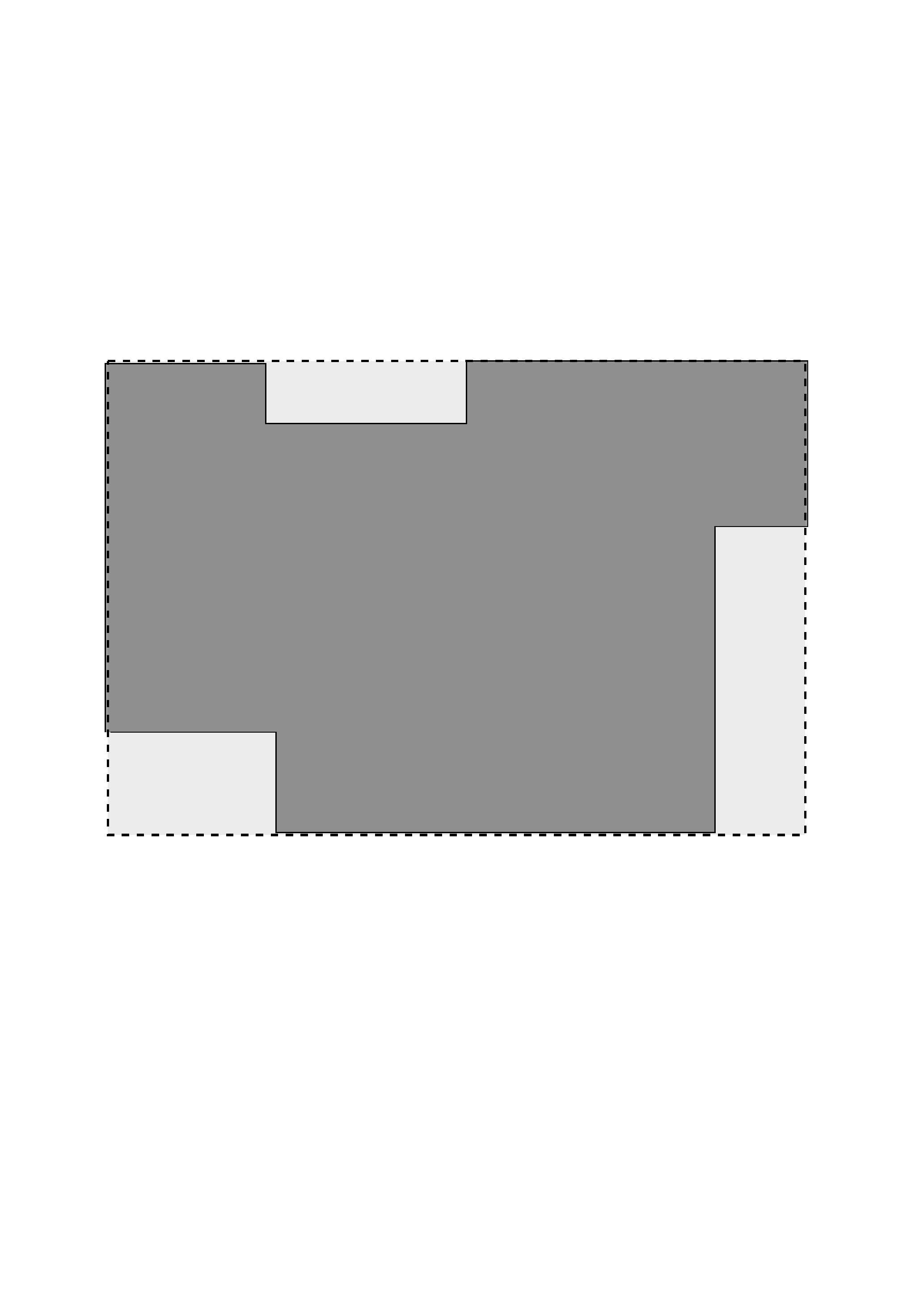}}
		\caption[Minimal rectangularizations as competitors]{The minimal coordinate rectangle $R$ containing a connected set $A_{\e}\in\mathcal{A}_{\e}$. It has less (or equal) perimeter and (\ref{measuregain}) holds.} \label{minrectangle}
		\begin{picture}(0,0)
		\put(15,100){$A_{\e}$}\put(45,80){R}
		\end{picture}
	\end{figure}
	Due to the strict inequality in (\ref{rectangle}) we conclude the proof as soon as we show that 
	\begin{equation}\label{measuregain}
	|R\backslash A_{\e}^{k+1}|\geq \frac{\e}{2}\mathcal{H}^1(\partial R\backslash\partial A_{\e}^{k+1}).
	\end{equation}
	Arguing locally on each connected component of $\partial R\backslash\partial A_{\e}^{k+1}$ we can assume that $\partial R\backslash\partial A_{\e}^{k+1}$ has one connected component $R_c$. We distinguish between the cases where $R_c$ contains only horizontal or vertical segments and where it has both, that is to say at a corner of $R$. If the component has only one type of segment then obviously $|R\backslash A_{\e}^{k+1}|\geq \e \mathcal{H}^1(R_c)$. In the other case let us assume that the horizontal segment $R_{c,x}$ is the larger one. Then $|R\backslash A_{\e}^{k+1}|\geq \e \mathcal{H}^1(R_{c,x})$ which also implies (\ref{measuregain}). This completes the proof.	
\end{proof}

\begin{rmrk}\label{genericbound}
	{\rm We want to comment on the $L^{\infty}$-bound (\ref{pertubationsize}): First note that is optimal in the sense that if it is violated, with positive probability there could be defects at the corners of the rectangle. On the other hand, our argument for proving connectedness can be extended with a slight effort to a generic $L^{\infty}$-bound on the random field. In order to determine the shape one then would need to use probabilistic arguments. We strongly believe (even though we did not check the argument in detail), that the law of large numbers implies that the minimizer must have the same deterministic perimeter as the minimal coordinate rectangle containing it. Moreover, the bulk term yields a control on the deviation from this minimal rectangle. However deviations can exist and that causes difficulties. In order to apply an inductive argument (which is necessary also for connectedness), one needs to control the deviation from a rectangle. However we are not able to rule out that deviations grow with the number of time steps. 
	}	
\end{rmrk}

\subsection{Computation of the velocity}
As a next step we derive a precise formula for the velocity of the discrete motion. We follow \cite{BGN} and express the functional to be minimized by the distance from each side of the optimal rectangle to the corresponding side of the previous set $A_{\e}^k(\w)$. Let $A_{\e}^{k+1}(\w)$ be a minimizer. To reduce notation, we let $s_{i,\e}$ and $s^{\prime}_{i,\e}$ $(i=1,\dots,4)$ be the sides of $A_{\e}^k(\w)$ and $A_{\e}^{k+1}(\w)$ respectively and set $l_{i,\e}=\mathcal{H}^1(s_{i,\e})$. We define $N^{k+1}_{i,\e}\e$ as the distance from the side $s_{i,\e}$ to the side $s^{\prime}_{i,\e}$. It can be easily shown that $A_{\e}^{k+1}(\w)$ must contain the center of the previous rectangle $A_{\e}^k(\w)$. Rewriting the functional $E^{\w}_{\e}(A,A_{\e}^k)$ in terms of the four integer numbers $N^{k+1}_{i,\e}$, we obtain that these are minimizers of the function $\tilde{f}^{\w}_{\e}:\mathbb{N}^4\to\R$ defined by
\begin{align*}
\tilde{f}_{\e}^{\w}(N):=&\sum_{i=1}^{4}(l_{i,\e}-2N_i\e)+\sum_{i=1}^4p^{\w}_{\e}(s_{i,\e}+N_{i}\e v_i)\e-\e^2e^{per}_{\e}+\frac{\e}{\g}\sum_{i=1}^4\sum_{n=1}^{N_i}l_{i,\e}n-\e^2e_{\e}^{bulk}
\\
=&\e\sum_{i=1}^{4}\left((\frac{l_{i,\e}}{\e}-2N_i)+p^{\w}_{\e}(s_{i,\e}+N_{i}\e v_i)+\frac{(N_i+1)N_i\,l_{i,\e}}{2\g}\right)-\e^2(e^{per}_{\e}+e_{\e}^{bulk}),
\end{align*}
where $v_i\in\{\pm e_1,\pm e_2\}$ denotes the vector representing the inward motion of each side and the error terms $e_{\e}^{per},\,e_{\e}^{bulk}$ account for the fact that we neglect the shrinking effect on the random part of the energy and that we count twice the bulk part in the corners (one time with the wrong distance). For these errors we have the following bounds:
\begin{equation}\label{errors}
|e_{\e}^{per}|\leq \frac{2}{\gamma}\max_i N_i,\quad\quad
|e_{\e}^{bulk}|\leq \frac{4}{\gamma}(\max_i N_i)^3.
\end{equation}
We argue that the error terms are negligible as $\e\to 0$. To this end we show that $\max_i N_{i,\e}$ is equibounded with respect to $\e$ as long as $l_{i,\e}\geq\eta>0$ for some $\eta>0$. Indeed, suppose without loss of generality that $N^*:=\max_i N_i$ corresponds to the right vertical side $s_{i,\e}$. Let us denote by $P$ the center of $A_{\e}^k(\w)$. Then, for $\e$ small enough, one can easily prove that
\begin{equation*}
\{x\in A_{\e}^k(\w):\;\frac{N}{2}\e\leq\dist(x,s_{i,\e})\leq N\e,\;|\langle x-P,e_2\rangle|\leq \frac{\eta}{4}\}\subset A_{\e}^{k}(\w)\backslash A_{\e}^{k+1}(\w).
\end{equation*}
Thus for the bulk term we obtain the lower bound
\begin{equation*}
\frac{1}{\g\e}\int_{A_{\e}^k(\w)\backslash A_{\e}^{k+1}(\w)}d_{\infty}^{\e}(x,\partial A_{\e}^k)\,\mathrm{d}x \geq \frac{\min\{\frac{\eta}{4},\frac{N}{2}\e\}}{\g}\frac{N\eta}{4}.
\end{equation*} 
Using (\ref{pertubationsize}) and (\ref{boundedsetting}), for $\e$ small enough, we deduce a lower bound for the random perimeter via
\begin{align*}
P^{\w}_{\e}(A_{\e}^{k+1}(\w))&\geq P^{\w}_{\e}(A_{\e}^k(\w))-8N\e-\frac{1}{2\g}\e\mathcal{H}^1(\partial A_{\e}^k(\w))-\frac{2}{\gamma}\e^2N
\\
&\geq E^{\w}_{\e}(A_{\e}^k(\w),A_{\e}^k(\w))-\e\left(9N+\frac{C}{2\gamma}\right).
\end{align*}
Assuming that $N\geq \frac{C}{2\gamma}$, we infer that such $N$ can't yield a minimizer as soon as
\begin{equation}\label{coercivity}
-10N\e+\frac{\min\{\frac{\eta}{4},\frac{N}{2}\e\}}{\g}\frac{N\eta}{4}>0.
\end{equation}
From (\ref{coercivity}) one can easily deduce that $N$ has to be bounded when $\e\to 0$.

It follows from (\ref{errors}) that, asymptotically, we can instead minimize the functional
\begin{equation}\label{minvelocity}
{f}_{\e}^{\w}(N)=\sum_{i=1}^4\left(-2N_i+p^{\w}_{\e}(s_{i,\e}+N_{i}\e v_i)+\frac{1}{2\g}(N_i+1)N_i\,l_{i,\e}\right),
\end{equation}
provided that the minimizer of the limit is unique. In particular, as in \cite{BGN}  each side moves independently from the remaining ones. More precisely, we have to study the minimizers of the one-dimensional random function
\begin{equation}\label{sidevelocity}
v_{i,\e}^{\w}(N):=-2N+p_{\e}^{\w}(s_{i,\e}+N\e v_i)+\frac{1}{2\g}(N+1)N\,l_{i,\e}.
\end{equation}
The asymptotic behavior of the stochastic term in (\ref{sidevelocity}) is more involved since the segment $s_{i,\e}$ can vary along infinitely many different lattice positions as $\e\to 0$. Thus a direct application of Birkhoff's ergodic theorem to prove the existence of a limit is not possible. Indeed, in what follows we will show the existence of a stationary, ergodic system of perturbations where for at least one side $s_{i,\e}$ the term $p_{\e}^{\w}(s_{i,\e}+N\e v_i)$ does not converge with probability $1$. We will come back to this example in Section \ref{lastexample}, where we indicate how to treat this case making some further assumptions.
\begin{xmpl}\label{nonconv}
	Let $\g=1$ and let $X_i,\,i\in\mathbb{Z}$ be a sequence of non-constant independent and identically distributed random variables on a probability space $(\Omega,\mathcal{F},\mathbb{P})$ equipped with a measure-preserving ergodic map $\tau:\Omega\rightarrow\Omega$ such that $X_k(\w)=X_0(\tau^k\w)$  (this setup can be realized on a suitable product space with the shift operator). Moreover assume that $\|X_i\|_{\infty}<\frac{1}{4}$ and set $c_{\xi}(\w)=X_{\lfloor\xi_1\rfloor}(\w)$. Then $c_{\xi}$ is a stationary, ergodic random field. If the initial coordinate rectangles $A_{\e}^0$ converge in the Hausdorff metric to a coordinate rectangle $A^0$, then for at least one of the vertical sides we have that, for all $N\in\mathbb{N}$,
	\begin{equation*}
	\mathbb{P}\left(\{\w:\;\lim_{\e\to 0}v_{i,\e}^{\w}(N)\text{ exists}\}\right)=0.
	\end{equation*}
\end{xmpl} 

\begin{proof}
	Let $\e_n\to 0$. Note that for at least one vertical side the $x$-component of $\frac{s_{i,\e_n}}{\e_n}$ diverges to $\pm\infty$. Then so does the $x$-component of $\frac{s_{i,\e_n}}{\e_n}+Nv_i$. Without loss of generality we assume that these $x$-components form a sequence of positive numbers $\{k_n+\frac{1}{2}\}_n\to +\infty$ with $k_n\in\mathbb{N}$. Passing to a subsequence (not relabeled) we can assume that this sequence is monotone increasing. Since $l_{i,\e_n}$ converges to the vertical side length $l_i$ of $A^0$, we only have to take into account the random term. Since $A_{\e_n}^0\in \mathcal{A}_{\e_n}$, we have 
	\begin{equation*}
	p_{\e_n}^{\w}(s_{i,\e_n}+N\e_n v_i)=X_{k_n}(\w)l_{i,\e_n}.
	\end{equation*}
	Since $l_{i,\e_n}$ converges to $l_i\neq 0$, the asymptotic behavior of $p_{\e_n}^{\w}(s_{i,\e_n}+N\e_nv_i)$ is characterized by $X_{k_n}(\w)$. Since these variables are non-constant and independent, it follows from Kolmogorov's 0-1 law that 
	\begin{equation*}
	\mathbb{P}(\{\w:\;\lim_nX_{k_n}(\w)\text{ exists}\})=0.
	\end{equation*}
	Moreover we can define the measure preserving group action $\tau_z:\Omega\to\Omega$ as	
	\begin{equation*}
	\tau_z\w:=\tau^{z_1}\w.
	\end{equation*}
	From the construction of the random field, it follows immediately that $\{c_{\xi}\}_{\xi}$ is stationary. By assumption the group action is ergodic, too.	
\end{proof}

Despite the negative result of the previous example, we now show that even in the worst case the term $p_{\e}^{\w}(s_{i,\e}+N\e v_i)$ doesn't influence the range of possible minimizers too much. Indeed, by (\ref{pertubationsize}) we have
\begin{equation*}
\sup_{N,N^{\prime}}|p_{\e}^{\w}(s_{i,\e}+N\e v_i)-p_{\e}^{\w}(s_{i,\e}+N^{\prime}\e v_i)|\leq \frac{1}{2\g}l_{i,\e},
\end{equation*}
while (one of) the integer minimizers for the polynomial $P(x)=-2x+\frac{l_{i,\e}}{2\g}(x+1)x$ is given by $x^*=\lfloor\frac{2\g}{l_{i,\e}}\rfloor$. We  deduce the estimate
\begin{align*}
|P(x^*\pm 2)-P(x^*)|=\begin{cases}
\frac{3l_{i,\e}}{\g}-4+\frac{2l_{i,\e}}{\g}x^*\geq\frac{l_{i,\e}}{\g}, \\
\frac{l_{i,\e}}{\g}+4-\frac{2l_{i,\e}}{\g}x^*\geq \frac{l_{i,\e}}{\g}.
\end{cases}
\end{align*}
We infer that for minimizing $v_{i,\e}^{\w}$ we need only to consider three values, namely
\begin{equation}\label{only3}
\min_N v_{i,\e}^{\w}(N)=\min\{v_{i,\e}^{\w}(x^*),\,v_{i,\e}^{\w}(x^*+1),\,v_{i,\e}^{\w}(x^*-1)\}.
\end{equation}

Thus the randomness can only cause one additional jump forwards or backwards. In order to obtain the convergence we need a stronger form of independence than ergodicity that is preserved on one-dimensional sections of $\mathbb{Z}^2$. It turns out that the $\alpha$-mixing condition introduced in (\ref{mixingdecay}) is enough. Indeed, we have the following crucial result:
\begin{prpstn}\label{allconvergence}
	Assume that the random field $\{c_{\xi}\}_{\xi}$ is stationary and $\alpha$-mixing such that (\ref{mixingdecay}) holds and set $\mu:=\mathds{E}[c_{\xi}]$. Let $\e_j\downarrow 0$. There exists a set $\Omega^{\prime}\subset\Omega$ of full probability (independent of the particular sequence $\e_j$) such that for every $\w\in\Omega^{\prime}$ and every sequence of sides $\{S_j\}_{j\in\mathbb{N}}$ such that $S_j$ converges in the Hausdorff metric to a segment $S$, we have
	\begin{equation*}
	\lim_jp^{\w}_{\e_j}(S_j)=\mathcal{H}^1(S)\mu.
	\end{equation*}
\end{prpstn}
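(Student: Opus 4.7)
The plan is to reduce Proposition \ref{allconvergence} to a one-dimensional uniform ergodic theorem along the rows (and, by symmetry, the columns) of the dual lattice $\mathcal{Z}^2$, and to extract the required uniformity via a Borel--Cantelli argument across dyadic scales. By the symmetry between horizontal and vertical edges assume $S = [s_1, s_2] \times \{t\}$ is horizontal; for $j$ large $S_j$ is horizontal and can be indexed by an integer row index $k_j$ and integer endpoints $a_j \leq a_j + n_j$, with Hausdorff convergence forcing $\e_j k_j \to t$, $\e_j a_j \to s_1$ and $\e_j(a_j + n_j) \to s_2$. Setting $X_i^k(\w) := c_{\xi_{i,k}}(\w) - \mu$ (for the $i$-th midpoint on the $k$-th row), the identity
$$p_{\e_j}^{\w}(S_j) - \mathcal{H}^1(S_j)\mu = \e_j \sum_{i = a_j}^{a_j + n_j - 1} X_i^{k_j}(\w),$$
together with $\mathcal{H}^1(S_j) \to \mathcal{H}^1(S)$, reduces the claim to showing that the right-hand side tends to $0$ uniformly in any admissible choice of $(k_j, a_j, n_j)$.

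For each fixed $k \in \mathbb{Z}$ the sequence $\{X_i^k\}_{i}$ is stationary, centered, bounded (by (\ref{pertubationsize})), and $\alpha$-mixing with the same summable rate as the 2D field, since the 1D mixing coefficients are dominated by the 2D ones. Applying Theorem \ref{amixing} with $p = 2$ to its partial sums $T_n^{k,a}(\w) := \sum_{i = a}^{a + n - 1} X_i^k(\w)$ yields, for every fixed $(k, a)$ and every $\delta > 0$,
$$\sum_{N \geq 1} \mathbb{P}\Big(\sup_{n \geq N} |T_n^{k,a}|/n > \delta\Big) < +\infty.$$
This gives only a pointwise a.s.\ statement at each \emph{fixed} $(k,a)$, whereas $(k_j, a_j)$ is unbounded and possibly $\w$-dependent, so the real work is to pass to uniformity.

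The strategy I would follow for this uniformity is to invoke a Rosenthal-type moment bound $\mathds{E}[|T_n^{k,a}|^{2p}] \leq C_p\, n^p$ valid for every integer $p \geq 1$; for bounded $\alpha$-mixing sequences with $\sum \alpha(n) < +\infty$ this follows from a standard Berbee-style block coupling, and it is precisely here that the boundedness provided by (\ref{pertubationsize}) is essential. Fixing rationals $R, \delta, \eta > 0$ and setting
$$B_\e(R, \delta, \eta) := \bigcup_{\substack{|k|, |a|, |a + n| \leq R/\e \\ n \geq \eta/\e}} \{|T_n^{k,a}| > \delta n\},$$
the cardinality of admissible triples is $O(\e^{-3})$, each individual probability is $\leq C_{p,\delta,\eta}\, \e^p$ by Markov, and the union bound therefore gives $\mathbb{P}(B_\e(R, \delta, \eta)) \leq C\, \e^{p - 3}$, summable in $\ell$ along the dyadic sequence $\e_\ell = 2^{-\ell}$ as soon as $p \geq 5$.

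Borel--Cantelli then ensures that for every rational triple $(R, \delta, \eta)$ the events $B_{\e_\ell}(R, \delta, \eta)$ occur only finitely often on a full-probability set; intersecting over the countably many rational triples produces the universal set $\Omega^{\prime}$, independent of any prescribed sequence $\e_j$. Given $\w \in \Omega^{\prime}$, any $\e_j \downarrow 0$ and any $S_j \to S$ with $S \subset [-R, R]^2$ and $\mathcal{H}^1(S) > 0$, fix $\eta < \mathcal{H}^1(S)/2$; for $j$ large the triple $(k_j, a_j, n_j)$ is admissible at the dyadic scale $\e_\ell$ closest to $\e_j$, hence $|T_{n_j}^{k_j, a_j}| \leq \delta n_j$, and multiplication by $\e_j$ together with $\e_j n_j \to \mathcal{H}^1(S)$ yields $\limsup_j |p_{\e_j}^{\w}(S_j) - \mathcal{H}^1(S_j)\mu| \leq \delta\, \mathcal{H}^1(S)$. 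Letting $\delta \downarrow 0$ along the rationals closes the argument; the passage from dyadic to arbitrary $\e_j$ only costs a multiplicative constant, and the degenerate case $\mathcal{H}^1(S) = 0$ is immediate from $|c_\xi| \leq 1/(4\g)$. The main obstacle is precisely the higher-moment Rosenthal-type estimate, without which the union bound cannot outrun the $O(\e^{-3})$ admissible triples at scale $\e$.
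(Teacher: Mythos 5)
Your reduction of the statement to a uniform control of the centered sums $T_n^{k,a}$ over the $O(\e^{-3})$ admissible triples is a reasonable way to frame the difficulty, and your union-bound/dyadic-scale bookkeeping is arithmetically consistent. But the step the whole argument rests on --- the Rosenthal-type bound $\mathds{E}[|T_n^{k,a}|^{2p}]\leq C_p\,n^p$ for \emph{every} $p$ --- does not follow from boundedness together with $\sum_n\alpha(n)<+\infty$. For $\alpha$-mixing sequences the moment inequalities of Rio--Doukhan type give, for bounded centered variables,
\begin{equation*}
\mathds{E}\big[|T_n|^{2p}\big]\;\leq\; C_p\Big[\Big(n\sum_{i<n}\alpha(i)\Big)^{p}+n\sum_{i<n}(i+1)^{2p-2}\alpha(i)\Big],
\end{equation*}
and the second term is only $O(n^p)$ if $\sum_i i^{2p-2}\alpha(i)<+\infty$; with merely (\ref{mixingdecay}) and, say, $\alpha(i)\sim i^{-1-\varepsilon}$, the fourth moment can already be of order $n^{3-\varepsilon}$ rather than $n^2$, and these conditions are known to be essentially sharp. (Also, Berbee's coupling lemma is a $\beta$-mixing device and does not transfer to $\alpha$-mixing, so the mechanism you invoke is not available either.) Under (\ref{mixingdecay}) only the variance bound $\mathds{E}[T_n^2]\lesssim n$ is at your disposal, which gives $\mathbb{P}(|T_n|>\delta n)\lesssim \e$ per triple --- hopeless against the $\e^{-3}$ entropy. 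So the union bound cannot close; your scheme would work under a much stronger hypothesis such as $\sum_n n^{8}\alpha(n)<+\infty$ or finite-range dependence, but not under the paper's.

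The paper escapes this by never paying full entropy for the column index: by stationarity, the probability $\mathbb{P}\big(\sup_{k\geq qn}\big|\tfrac{1}{2k+1}\sum_{|l|\leq k}c_{([n]^*,l)}\big|>\delta\big)$ does not depend on the column $n$ except through the threshold $qn$ inside the supremum, and Theorem \ref{amixing} with $p=2$ (exactly the exponent matching (\ref{mixingdecay})) makes these maximal-type probabilities summable \emph{in $n$}. The geometric fact that a side converging to a segment with $x$-coordinate $x\neq 0$ has column index and length both of order $1/\e_j$ is what couples the two sources of unboundedness and lets a single Borel--Cantelli argument absorb the union over columns. The remaining uniformity in the vertical offset (your parameter $a$) is genuinely delicate and is handled in the paper not by a union bound but by a separate argument combining conditional expectations with Birkhoff's theorem to locate a nearby translate in a ``good'' event. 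If you want to salvage your approach you must replace the moment bound by a maximal inequality calibrated to $p=2$ and find a substitute for the union over offsets; as written, the proof has a genuine gap.
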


\begin{proof}
	We assume that the side is a vertical side, the case of horizontal sides works the same way with another set of full measure. Moreover it is not restrictive to consider the case $\mu=0$. To reduce notation we let $[x]^*:=\lfloor x\rfloor +\frac{1}{2}$. Given $q\in\mathbb{Q}\cap(0,+\infty)$ we define the following sequences of random variables:	
	\begin{equation*}
	X_n^{q,\pm}(\w):=\sup_{k\geq qn}\left|\frac{1}{2k+1}\sum_{l=-k}^{k}c_{([\pm n]^*,l)}(\w)\right|.
	\end{equation*}	
	Given $\delta>0$, by stationarity and an elementary fact about average sums we have
	\begin{align}
	\mathbb{P}\left(|X^{q,\pm}_n|>\delta\right)&=\mathbb{P}\bigg(\sup_{k\geq qn}\bigg|\frac{1}{2k+1}\sum_{l=-k}^{k}c_{([0]^*,l)}\bigg|>\delta\bigg)\nonumber
	\\
	&\leq \mathbb{P}\bigg(\sup_{k\geq qn}\bigg|\frac{1}{k+1}\sum_{l=0}^{k}c_{([0]^*,l)}\bigg|>\delta\bigg)+\mathbb{P}\bigg(\sup_{k\geq qn}\bigg|\frac{1}{k}\sum_{l=1}^{k}c_{([0]^*,-l)}\bigg|>\delta\bigg).
	\label{statio}
	\end{align}	
	Upon rescaling $c_{\xi}$ we can apply Theorem \ref{amixing} with $p=2$ to the two bounded and $\alpha$-mixing sequences $\{c_{([0]^*,l)}\}_{l\in\mathbb{N}}$ and $\{c_{([0]^*,-l)}\}_{l\in\mathbb{N}}$ and deduce from (\ref{statio}) that
	\begin{align*}
	\sum_{n\geq 1}\mathbb{P}(|X_n^{q,\pm}|>\delta)
	&\leq\sum_{n\geq 1}\mathbb{P}\bigg(\sup_{k\geq qn}\bigg|\frac{1}{k+1}\sum_{l=0}^{k}c_{([0]^*,l)}\bigg|>\delta\bigg)+\mathbb{P}\bigg(\sup_{k\geq qn}\bigg|\frac{1}{k}\sum_{l=1}^{k}c_{([0]^*,-l)}\bigg|>\delta\bigg)
	\\
	&\leq \lceil q^{-1}\rceil\sum_{i\geq 1}\mathbb{P}\bigg(\sup_{k\geq i}\bigg|\frac{1}{k+1}\sum_{l=0}^{k}c_{([0]^*,l)}\bigg|>\delta\bigg)+\mathbb{P}\bigg(\sup_{k\geq i}\bigg|\frac{1}{k}\sum_{l=1}^{k}c_{([0]^*,-l)}\bigg|>\delta\bigg)
	<+\infty.
	\end{align*} 
	Hence by the Borel-Cantelli Lemma there exists a set of full probability $\Omega^q$ such that both $X_n^{q,+}$ and $X_n^{q,-}$ converge to $0$ pointwise on $\Omega^q$. We set $\Omega^{\prime\prime}:=\bigcap_{q}\Omega^q$.
	
	Next we check that we can relate the random length of the side $S_j$ to one of the random variables $X^{q,\pm}_n$. Let $S_j$ converge to a segment $S$ in the Hausdorff metric and denote by $x\in\R$ the $x$-coordinate of $S$. We start with the case $x>0$. Fix $\beta>0$ and let $x_j\in\mathbb{Z}+\frac{1}{2}$ be the $x$-component of $S_j/\e_j$. Then there exists $j_0=j_0(\beta)$ such that for all $j\geq j_0$ we have $x+\beta\geq \e_jx_j$ and $\e_j \#\{\xi\in S_j/\e_j\cap\mathcal{Z}^2\}\geq \mathcal{H}^1(S)-\beta$. For such $j$ we infer that
	\begin{equation*}
	\#\{\xi\in \frac{S_j}{\e_j}\cap\mathcal{Z}^2\}\geq\frac{\mathcal{H}^1(S)-\beta}{x+\beta}x_j.
	\end{equation*}	
	For $\beta$ small enough, there exists $q\in\mathbb{Q}$ such that $\frac{\mathcal{H}^1(S)-\beta}{x+\beta}>3q>0$. Now for every $j$ we let $n_j\in\mathbb{N}$ satisfying $[n_j]^*=x_j$ (we may assume that $x_j>0$ for all $j$). Then
	\begin{equation}\label{goodratio}
	\#\{\xi\in \frac{S_j}{\e_j}\cap\mathcal{Z}^2\}> 3qn_j.
	\end{equation}	
	Let us first assume that $S=\{x\}\times\frac{1}{2}[-\mathcal{H}^1(S),\mathcal{H}^1(S)]$ is a centered side. Then for $j$ large enough it holds
	\begin{equation}\label{lenghtclose}
	\#\Big(\Big\{\xi\in \frac{S_j}{\e_j}\cap\mathcal{Z}^2\Big\}\Delta\Big\{\xi=(x_j,l)\in\mathcal{Z}^2:\;|l|\leq\frac{\mathcal{H}^1(S_j)}{2\e_j}\Big\}\Big)\leq \frac{\beta}{\e_j},
	\end{equation}
	so that by (\ref{pertubationsize}) we have	
	\begin{align}
	\bigg|\sum_{\e_j\xi\in S_j}\e_jc_{\xi}(\w)\bigg|&=\mathcal{H}^1(S_j)\bigg|\frac{1}{\#\{\e_j\xi\in S_j\}}\sum_{\e_j\xi\in S_j}c_{\xi}(\w)\bigg|\nonumber
	\\
	&\leq C\beta +C\bigg|\frac{1}{\#\{\e_j\xi\in S_j\}}\sum_{2\e_j|l|\leq\mathcal{H}^1(S_j)}c_{(x_j,l)}(\w)\bigg|\leq C\beta +CX_{n_j}^{q,+}(\w),\label{withq}
	\end{align}
	where we used that $\lfloor\mathcal{H}^1(S_j)/(2\e_j)\rfloor\geq qn_j$ for all but finitely many $j$ by (\ref{goodratio}). Since $\beta>0$ is arbitrary and $X_{n_j}^{q,+}(\w)\to 0$ for all $\w\in\Omega^{\prime\prime}$ we conclude in this special case.
	
	Now assume that $S=\{x\}\times\frac{1}{2}[y-\mathcal{H}^1(S),y+\mathcal{H}^1(S)]$ with $y>0$ (the other case is similar). We aim to transfer the variables pointwise with the help of the group action. For $\beta>0$ and $q$ fixed as above, we define the events
	\begin{equation*}
	\mathcal{Q}_N:=\left\{\w\in\Omega:\;\forall n\geq \frac{N}{2}\text{ it holds }|X_n^{q,+}(\w)|\leq\beta\right\}.
	\end{equation*}
	By the arguments hitherto we know that the function $\mathds{1}_{\mathcal{Q}_N}$ converges to $\mathds{1}_{\Omega}$ almost surely. Let us denote by $\mathcal{J}_{e_2}$ the (maybe non-trivial) $\sigma$-algebra of invariant sets for the measure preserving map $\tau_{e_2}$. Fatou's lemma for the conditional expectation yields
	\begin{equation*}
	\mathds{1}_{\Omega}=\mathds{E}[\mathds{1}_{\Omega}|\mathcal{J}_{e_2}]\leq\liminf_{N\to +\infty}\mathds{E}[\mathds{1}_{\mathcal{Q}_N}|\mathcal{J}_{e_2}].
	\end{equation*} 
	Hence we know that, given $\delta>0$, almost surely we find $N_0=N_0(\w,\delta)$ such that
	\begin{equation*}
	1\geq\mathds{E}[\mathds{1}_{\mathcal{Q}_{N_0}}|\mathcal{J}_{e_2}](\w)\geq 1-\delta.
	\end{equation*} 
	Due to Birkhoff's ergodic theorem, almost surely, there exists $n_0=n_0(\w,\delta)$ such that, for any $m\geq \frac{1}{2}n_0$,
	\begin{equation*}
	\left|\frac{1}{m}\sum_{i=1}^m\mathds{1}_{\mathcal{Q}_{N_0}}(\tau_{i {e_2}}\w)-\mathds{E}[\mathds{1}_{\mathcal{Q}_{N_0}}|\mathcal{J}_{e_2}](\w)\right|\leq\delta.
	\end{equation*}
	Note that the set we exclude will be a countable union of null sets (depending only on the sequences $X_n^{q,\pm}$ and rational $\beta$). With a slight abuse of notation we still call the smaller set $\Omega^{\prime\prime}$.

	We now fix $\w\in\Omega^{\prime\prime}$. For $m\geq\max\{n_0(\w,\delta),N_0(\w,\delta)\}$ we denote by $R$ the maximal integer such that for all $i=m+1,\dots,m+R$ we have $\tau_{ie_2}(\w)\notin \mathcal{Q}_{N_0}$. In order to bound $R$ let $\tilde{m}$ be the number of non-zero elements in the sequence $\{\mathds{1}_{\mathcal{Q}_{N_0}}(\tau_{ie_2}(\w))\}_{i=1}^{m}$. By definition of $R$ we have
	\begin{equation*}
	\delta\geq\left|\frac{\tilde{m}}{m+R}-\mathbb{E}[1_{\mathcal{Q}_{N_0}}|\mathcal{J}_{e_2}](\w)\right|=\left|1-\mathbb{E}[1_{\mathcal{Q}_{N_0}}|\mathcal{J}_{e_2}](\w)+\frac{\tilde{m}-m-R}{m+R}\right|\geq \frac{R+m-\tilde{m}}{m+R}-\delta.
	\end{equation*}
	Since $m-\tilde{m}\geq 0$ and without loss of generality $\delta\leq\frac{1}{4}$, this provides an upper bound by $R\leq 4m\delta$. Thus for an arbitrary $m\geq\max\{n_0(\w,\delta),N_0(\w,\delta)\}$ and $\tilde{R}=6m\delta$ we find $l_m\in [m+1,m+\tilde{R}]$ such that $\tau_{l_me_2}(\w)\in \mathcal{Q}_{N_0}$. Then we have for all $n\geq \frac{N_0}{2}$ that
	\begin{equation}\label{betaestimate}
	|X_n^{q,+}(\tau_{l_me_2}\w)|\leq \beta.
	\end{equation}
	For $j$ large enough we have $\lfloor y/\e_j\rfloor\geq \max\{n_0(\w,\delta),N_0(\w,\delta)\}$ so that there exists $l_j\in\mathbb{N}$ satisfying (\ref{betaestimate}) and moreover
	\begin{equation}\label{almostshifted}
	|\lfloor{y}/{\e_j}\rfloor-l_j|\leq 6\delta\lfloor{y}/{\e_j}\rfloor.
	\end{equation}
	In addition we can assume that $|\mathcal{H}^1(S)-\mathcal{H}^1(S_j)|\leq\beta$. Note that (\ref{almostshifted}) is the analogue of (\ref{lenghtclose}). Thus from (\ref{betaestimate}), stationarity and the definition of $X_n^{q,+}$ we deduce that
	\begin{equation}\label{noncentersmall}
	|p_{\e_j}^{\w}(S_j)|\leq C_{y}(\beta+\delta)
	\end{equation}
	for all $j$ large enough. By the arbitrariness of $\beta$ and $\delta$ we proved the claim.
	
	The case $x<0$ can be proved the same way using the random variables $X_n^{q,-}$ instead. It remains the case when $x=0$. For fixed $z\in\mathbb{Z}$ we consider the following sequences of random variables:
	\begin{equation*}
	Y_n^z(\w):=\sup_{k\geq n}\left|\frac{1}{2k+1}\sum_{l=-k}^{k}c_{([z]^*,l)}(\w)\right|.
	\end{equation*}
	With essentially the same arguments as above one can show that there exists a set $\Omega^z$ of full probability such that for every sequence of sides $S_j$ contained in $[z]^*\times\R$ and all $\w\in\Omega^z$ we have
	\begin{equation*}
	p_{\e_j}^{\w}(S_j)\to 0,
	\end{equation*}
	where $\Omega^z$ does not depend on the sequence $\e_j$. We finally set $\Omega^{\prime}:=\Omega^{\prime\prime}\cap\bigcap_z\Omega^z$. Let us fix $\w\in\Omega^{\prime}$. Note that if $x=0$, then for every subsequence of $\e_j$ there exists a further subsequence $\e_{j_k}$, such that either
	\begin{enumerate}
		\item[(i)] $x_{j_k}\to \pm\infty$,
		\item[(ii)] $x_{j_k}=[z]^*$ for all $k$ and for some $z\in\mathbb{Z}$
	\end{enumerate}
	In the first case we can use the construction for $x\neq 0$ with arbitrary $q\in\mathbb{Q}\cap(0,+\infty)$ since $\w\in\Omega^{\prime\prime}$ and in the second case we use that $\w\in\Omega^z$ to conclude.	
\end{proof}

\begin{rmrk}
	{\rm It is straightforward to check that the limit relation of Proposition \ref{allconvergence} holds for convergence in probability even under the weaker assumption that both the $\sigma$-algebras invariant with respect to the two group actions $\tau_{e_1},\,\tau_{\e_2}$ are trivial. 
	}
\end{rmrk}

With Proposition \ref{allconvergence} at hand we are now in a position to prove our main result.

\begin{thrm}\label{motionthm1}
	Assume that the random field $\{c_{\xi}\}_{\xi}$ is stationary and $\alpha$-mixing such that (\ref{mixingdecay}) holds. Then with probability $1$ the following holds: Let $\e_j\downarrow 0$ and let $A_{j}^0(\w)\in\mathcal{A}_{\e_j}$ be a coordinate rectangle with sides $S_{1,j}(\w),...,S_{4,j}(\w)$. Assume that $A_{j}^0(\w)$ converges in the Hausdorff metric to a coordinate rectangle $A(\w)$. Then we can choose a subsequence (not relabeled), such that $A_{\e_j}(t)(\w)$ converges locally in time to $A(t)(\w)$, where $A(t)(\w)$ is a coordinate rectangle with sides $S_i(t)(\w)$ such that $A(0)(\w)=A(\w)$ and any side $S_i(t)(\w)$ moves inward with velocity $v_i(t)(\w)$ solving the following differential inclusions:
	\begin{equation*}
	v_i(t)(\w)
	\begin{cases}
	=\frac{1}{\g}\left\lfloor\frac{2\g}{L_i(t)(\w)}\right\rfloor &\mbox{if $\frac{2\g}{L_i(t)(\w)}\notin\mathbb{N}$,}\\
	\\
	\in \frac{1}{\g}\left[\left(\frac{2\g}{L_i(t)(\w)}-1\right),\frac{2\g}{L_i(t)(\w)}\right] &\mbox{if $\frac{2\g}{L_i(t)(\w)}\in\mathbb{N}$,}
	\end{cases}
	\end{equation*}
	where $L_i(t)(\w):=\mathcal{H}^1(S_i(t)(\w))$ denotes the length of the side $S_i(t)(\w)$. The differential inclusions are valid until the extinction time when $L_i(t)(\w)=0$.
\end{thrm}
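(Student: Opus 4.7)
The plan is to reduce the analysis to the deterministic framework of Theorem \ref{bgn1} by combining four ingredients: Proposition \ref{stillrectangle} (the rectangular shape is preserved), the four-fold decoupling (\ref{minvelocity}), the three-value confinement (\ref{only3}), and Proposition \ref{allconvergence} (the random perimeter homogenises). Throughout I fix $\w$ in the full-measure event $\Omega'$ provided by Proposition \ref{allconvergence}.

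First I would apply Proposition \ref{stillrectangle} inductively to conclude that, as long as every side of the current rectangle has length bounded below by some $\eta>0$, each iterate $A_{\e_j}^k(\w)$ is a coordinate rectangle contained in its predecessor. The configuration is thus encoded by four boundary coordinates, and the uniform bound on $\max_i N_{i,\e}$ derived between (\ref{errors}) and (\ref{coercivity}) provides a uniform-in-$j$ Lipschitz estimate for these coordinates as functions of $t$. By Ascoli--Arzel\`a I then extract a further subsequence along which all four coordinates converge locally uniformly, yielding a limit family $A(t)(\w)$ of coordinate rectangles with sides $S_i(t)(\w)$ of lengths $L_i(t)(\w)$, defined up to the first extinction time.

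The core step is the velocity identification. At any time $t$ with $L_i(t)(\w)>0$ for all $i$, the decoupling (\ref{minvelocity})--(\ref{sidevelocity}) makes each side move independently by $N_{i,\e_j}\e_j$, where $N_{i,\e_j}$ minimises $v_{i,\e_j}^\w$. By (\ref{only3}) we have $N_{i,\e_j} \in \{x^*-1,x^*,x^*+1\}$ with $x^* = \lfloor 2\g/l_{i,\e_j}\rfloor$, and applying Proposition \ref{allconvergence} to each of the three translated copies of $s_{i,\e_j}$ (which all converge in Hausdorff metric to $S_i(t)(\w)$ since $N\e_j\to 0$) gives
\[
p_{\e_j}^\w\bigl(s_{i,\e_j} + N\e_j v_i\bigr) \longrightarrow L_i(t)(\w)\mu, \qquad N \in \{x^*-1, x^*, x^*+1\},
\]
a limit independent of $N$. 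Hence the random contribution cancels in the differences $v_{i,\e_j}^\w(x^*+h) - v_{i,\e_j}^\w(x^*)$ for $h\in\{-1,+1\}$, and the asymptotic ranking of the three candidates is driven by the deterministic polynomial $P(x) = -2x + \tfrac{L_i(t)(\w)}{2\g}(x+1)x$. Since $P(x+1)-P(x) = -2 + L_i(t)(\w)(x+1)/\g$, I would read off: if $2\g/L_i(t)(\w) \notin \mathbb{N}$, $P$ has a strict minimum at $\lfloor 2\g/L_i(t)(\w)\rfloor$ which survives the vanishing perturbation and fixes the velocity $\tfrac{1}{\g}\lfloor 2\g/L_i(t)(\w)\rfloor$; if $2\g/L_i(t)(\w) = m \in \mathbb{N}$, both $m$ and $m-1$ minimise $P$, and the Lipschitz limit of the side position has derivative in $\tfrac{1}{\g}[m-1,m]$, which is the announced differential inclusion (the conversion $v_i = N_{i,\e_j}\e_j/(\g\e_j)$ is built into the scaling $\tau=\g\e$).

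The main obstacle is that the one-step minimisation of $v_{i,\e_j}^\w$ responds to $O(1)$ variations of its values, while the random term $p_{\e_j}^\w$ is itself of order $O(1)$ and fluctuates in $\w$, so a priori the noise could shift the discrete minimiser unpredictably. What rescues the argument is precisely the combination of (\ref{only3}), which confines the minimiser to a window of only three consecutive integers around the deterministic optimum $x^*$, and Proposition \ref{allconvergence}, which delivers convergence of $p_{\e_j}^\w$ to the \emph{same} constant $L_i(t)(\w)\mu$ on each of the three relevant translated sides. On this finite window the noise is therefore asymptotically constant, and the selection of the minimiser reduces to the deterministic polynomial exactly as in \cite{BGN}.
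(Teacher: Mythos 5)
Your proposal is correct and follows essentially the same route as the paper: Proposition \ref{stillrectangle} to preserve the rectangular shape, the uniform bound on the discrete velocity for compactness, and the combination of (\ref{only3}) with Proposition \ref{allconvergence} to reduce the minimiser selection to the deterministic polynomial, exactly as in the paper's proof. The only (inessential) difference is bookkeeping: the paper extracts the velocity from the weak*-limit of the piecewise constant interpolations $N_i^j$ via the integral identity (\ref{integralform}), while you use Ascoli--Arzel\`a on the boundary coordinates and difference quotients, which amounts to the same thing.
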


\begin{proof}
	Let $\Omega^{\prime}$ be the set of full probability given by Proposition \ref{allconvergence}. We fix $\w\in\Omega^{\prime}$. Since $A_j^0(\w)$ converges to a coordinate rectangle we can assume that the sides of $A_j^0(\w)$ are larger than $\eta>0$ for some $\eta$ independent of $j$. Therefore we can apply Proposition \ref{stillrectangle} for all $j$ large enough. For fixed $j$ and $i=1,...,4$, the minimizing movement procedure yields two random sequences $L_{i,\e_j}^k(\w),\,N_{i,\e_j}^k(\w)$. Let us denote by $L_i^{j}(t)(\w)=L_{i,\e_j}^{\lfloor \tau_j/t\rfloor}(\w)$ and $N_i^{j}(t)(\w)=N_{i,\e_j}^{\lfloor\tau_j/t\rfloor}(\w)$ the piecewise constant interpolations. Note that the function $L_i^j(t)(\w)$ is decreasing in $t$. Set
	\begin{equation*}
	t^*:=\min_i\left\{\inf\{t>0:\;\liminf_jL_i^j(t)(\w)=0\}\right\}\in[0,+\infty].
	\end{equation*}
	Recall that we already deduced from (\ref{coercivity}) that the discrete velocity, that is the distance between two corresponding sides between two time steps is equibounded by $C_{\eta}\e$ for some constant $C_{\eta}$. Thus it follows that 
	\begin{equation*}
	\min_i\liminf_{j}L_i^j(t^*)(\w)=0
	\end{equation*}
	and consequently $t^*>0$. Without changing notation we consider the subsequence realizing the $\liminf$. Then, by monotonicity, one can verify that for all $t<t^*$ we have
	\begin{equation*}
	\min_i\liminf_jL_i^j(t)(\w)>0.
	\end{equation*}
	Now fix $t_1<t^*$. Taking $i$ into account modulo 4, by construction it holds
	\begin{equation}\label{movement}
	\frac{L_{i,\e_j}^{k+1}(\w)-L_{i,\e_j(\w)}^k}{\tau}=-\frac{1}{\g}(N_{i-1,\e_j}^k(\w)+N_{i+1,\e_j}^k(\w)).
	\end{equation}
	Hence on $[0,t_1]$ the piecewise affine interpolations $t\mapsto L_i^{j,a}(t)(\w)$ are uniformly Lipschitz-continuous and decreasing while $N_i^j(t)(\w)$ is locally bounded in $L^{\infty}$. Thus, by a diagonal argument, we can find a further subsequence such that $L_i^j(t)\to L_i(t)$ pointwise and locally uniformly on $[0,t^*)$ for some locally Lipschitz-continuous, decreasing function $L_i(t)(\w)$ and additionally $N_i^j(t)(\w)$ weakly*-converges in $L^{\infty}_{\rm loc}((0,t^*))$ to some function $N_i(t)(\w)$. It follows that, up to a subsequence, $A_{\e_j}(t)(\w)$ converges in the Hausdorff metric to a coordinate rectangle $A(t)(\w)$ for all $0\leq t<t^*$.
	
	We conclude the proof by computing the velocity of each side $L_i(t)(\w)$. Again we fix $0<t<t^*$. Then $\liminf_jL_i^j(t)(\w)>0$ for all $i$. Therefore we have that the minimizers $N_{i,\e_j}^k(\w)$ of the functional $f_{\e_j}^{\w}(N)$ introduced in (\ref{minvelocity}) are uniformly bounded if $|k\tau_j-t|$ is small enough. Hence they converge, up to subsequences, to minimizers of the pointwise limit of $f^{\w}_{\e_j}$ (this can be seen as a special case of $\Gamma$-convergence on discrete spaces). By Proposition \ref{allconvergence} and the precedent discussion we know that
	\begin{equation*}
	f_{\e_j}^{\w}(N)\to\sum_{i=1}^4-2N_i+L_i(t)(\w)\mu+\frac{1}{2\g}(N_i+1)N_iL_i(t)(\w)
	\end{equation*}
	with $\mu=\mathbb{E}[c_{\xi}]$. A straightforward calculation shows that the minimizers are given by
	\begin{align*}
	N_i
	\begin{cases}
	=\left\lfloor\frac{2\g}{L_i(t)(\w)}\right\rfloor &\mbox{if $\frac{2\g}{L_i(t)(\w)}\notin\mathbb{N}$,}\vspace*{0.25cm}
	\\
	\in \left\{\frac{2\g}{L_i(t)(\w)}-1,\frac{2\g}{L_i(t)(\w)}\right\} &\mbox{otherwise.}	
	\end{cases}
	\end{align*}
	Summing the equality (\ref{movement}) we further infer that
	\begin{align}\label{integralmoton}
	L_i^{j}(t)(\w)&=L_i^j(0)(\w)-\frac{1}{\g}\sum_{k=0}^{\lfloor t/\tau_j\rfloor}\tau_j(N_{i-1}^j(k\tau_j)(\w)+N_{i+1}^j(k\tau_j)(\w))\nonumber
	\\
	&=L_i^j(0)(\w)-\frac{1}{\g}\int_0^tN_{i-1}^j(s)(\w)+N_{i+1}^j(s)(\w)\,\mathrm{d}s+{\mathcal{O}}(\tau_j).
	\end{align}
	Passing to the limit as $j\to+\infty$ in (\ref{integralmoton}), we deduce from local weak convergence that	
	\begin{equation}\label{integralform}
	L_i(t)(\w)=L_i(0)(\w)-\frac{1}{\g}\int_0^t(N_{i-1}(s)(\w)+N_{i+1}(s)(\w))\,\mathrm{d}s.
	\end{equation}	
	To conclude, we note that if $t$ is such that $2\g/L_i(t)(\w)\notin\mathbb{N}$, then by continuity we have that $2\g/L_i(t^{\prime})(\w)\notin\mathbb{N}$ for $|t-t^{\prime}|\leq\delta$ and some $\delta>0$. It follows from comparing pointwise convergence with weak*-convergence that
	\begin{equation*}
	N_i(t^{\prime})(\w)=\gamma v_i(t^{\prime})(\w)\quad\text{for almost all  } |t-t^{\prime}|\leq\delta.
	\end{equation*} 
	In particular $N_i(\cdot)(\w)$ has a constant representative on $(t-\delta,t+\delta)$ so that the velocity of the side $S_i(t)(\w)$ given by	
	\begin{equation*}
	\lim_{h\to 0}\frac{1}{2}\frac{L_{i-1}(t+h)(\w)-L_{i-1}(t)(\w)}{h}=-v_i(t)(\w)
	\end{equation*}
	exists by (\ref{integralform}) whenever $2\g/L_i(t)(\w)\notin\mathbb{N}$. Note that the formula for the velocity is true because if $2\g/L_i(t)(\w)\notin\mathbb{N}$, then at least in every short time interval opposite sides move with the same velocities. The claim for $2\g/L_i(t)(\w)\in\mathbb{N}$ follows from well known properties of weak*-convergence (note that for these values of $t$ the velocity may not be a classical derivative).	
\end{proof}

Following word by word the proof of \cite[Theorem 2]{BGN} we obtain unique limit motions in many cases:

\begin{crllr}
	Let $A_{\e}^0(\w)$ and $\{c_{\xi}\}_{\xi}$ be as in Theorem \ref{motionthm1}. Assume in addition that the lengths $L_1^0(\w),L_2^0(\w)$ of $A(\w)$ satisfy one of the three following conditions (assuming that $L^0_1(\w)\leq L^0_2(\w))$:
	\begin{enumerate}
		\item[(i)] $L_1^0(\w),L_2^0(\w)>2\g$ (total pinning),
		\item[(ii)] $L_1^0(\w)<2\g$ and $L_2^0(\w)\leq 2\g$ (vanishing in finite time with shrinking velocity larger than $1/\g$),
		\item[(iii)] $L_1^0(\w)<2\g$ and $2\g/L_1^0(\w)\notin\mathbb{N}$, and $L_2^0(\w)>2\g$ (partial pinning).
	\end{enumerate}
	Let $\e_j\to 0$. The sequence $A_{\e_j}(t)(\w)$ converges locally in time to $A(t)(\w)$, where $A(t)(\w)$ is the unique coordinate rectangle with sides $S_1(t)(\w)$ and $S_2(t)(\w)$ such that $A(0)(\w)=A(\w)$ and the side lengths $L_1(t)(\w)$ and $L_2(t)(\w)$ solve the following differential equations for all but countably many times:
	\begin{equation*}
	\begin{cases}
	\frac{d}{dt}L_1(t)(\w)=-\frac{2}{\g}\left\lfloor\frac{2\g}{L_2(t)(\w)}\right\rfloor,\vspace*{0.25cm}
	\\
	\frac{d}{dt}L_2(t)(\w)=-\frac{2}{\g}\left\lfloor\frac{2\g}{L_1(t)(\w)}\right\rfloor
	\end{cases}
	\end{equation*}
	with initial condition $L_{1}(0)(\w)=L_{1}^0(\w)$ and $L_2(0)(\w)=L_2^0(\w)$.
\end{crllr}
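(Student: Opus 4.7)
The plan is to invoke Theorem~\ref{motionthm1} on a set of full probability to extract from any subsequence of $\e_j$ a further subsequence along which $A_{\e_j}(t)(\w)$ converges locally in time to a coordinate rectangle whose side lengths $L_1(t)(\w), L_2(t)(\w)$ satisfy the stated differential inclusions. The corollary then reduces to a purely deterministic uniqueness statement for the Cauchy problem associated to those inclusions, under each of the three listed hypotheses. Granted uniqueness, a standard Urysohn-type argument upgrades subsequential convergence to convergence of the full sequence. The whole analysis follows the structure of \cite[Theorem~2]{BGN} almost verbatim, since both the inclusion and the initial data have exactly the same form as in the deterministic case.

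Case (i) is essentially immediate: both $L_i^0(\w)>2\g$ yields $\lfloor 2\g/L_i^0(\w)\rfloor=0$, hence by continuity of $L_i(\cdot)(\w)$ and monotonicity (the sides can only move inward, so $L_i(t)(\w)\leq L_i^0(\w)$ only if we allow shrinking) the inclusion forces $v_i\equiv 0$ on a neighborhood of $t=0$; the lengths remain at their initial values and total pinning propagates for all times, giving uniqueness trivially.

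For case (ii), $L_i(t)(\w)\leq 2\g$ implies $\lfloor 2\g/L_i(t)(\w)\rfloor\geq 1$, so both velocities are bounded below by $1/\g$ and both side lengths strictly decrease until extinction. On any time interval on which neither $2\g/L_1(t)(\w)$ nor $2\g/L_2(t)(\w)$ is an integer, the right-hand side of the ODE system is locally constant, so the trajectory consists of a finite succession of affine pieces glued across the (at most countably many) threshold times at which some $L_i$ crosses a value of the form $2\g/n$. An induction on these threshold times yields uniqueness until $L_1(t)(\w)=0$. Case (iii) reduces to case (ii) after an initial phase: the condition $L_2^0(\w)>2\g$ gives $v_2\equiv 0$, while $2\g/L_1^0(\w)\notin\mathbb{N}$ makes $v_1$ single-valued; thus $L_1(\cdot)(\w)$ is pinned at $L_1^0(\w)$ and $L_2(\cdot)(\w)$ decreases at the constant rate $-\tfrac{2}{\g}\lfloor 2\g/L_1^0(\w)\rfloor$ until $L_2(t)(\w)$ either reaches $L_1^0(\w)$ or crosses a threshold $2\g/n$, after which case (ii) applies.

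The main (minor) subtlety to handle is the exceptional set of times at which $2\g/L_i(t)(\w)\in\mathbb{N}$, where the inclusion is genuinely multivalued. This set is at most countable, hence Lebesgue-negligible, and both admissible values of $v_i$ at such an instant produce the same trajectory in the integral form of the evolution derived at the end of the proof of Theorem~\ref{motionthm1}. Consequently the Cauchy problem admits a unique absolutely continuous solution whose derivative satisfies the stated ODE system for all but countably many $t$, which is exactly what the corollary asserts.
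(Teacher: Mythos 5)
Your argument is correct and follows exactly the route the paper intends: the paper's ``proof'' of this corollary is a one-line deferral to the proof of Theorem~2 in \cite{BGN}, and your proposal simply spells out that deterministic uniqueness-plus-Urysohn argument (pinning in case (i), piecewise-affine evolution between the countably many threshold crossings in case (ii), and reduction of case (iii) to case (ii) once $L_2$ reaches $2\g$). No gaps; the details you supply are the ones the cited proof contains.
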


\begin{rmrk}\label{compactness}
	{\rm Without any assumptions on the distribution of the random field except the bound (\ref{pertubationsize}), up to subsequences we can still obtain a rectangular limit motion. Due to (\ref{only3}) we can also give an estimate of the velocity via
		\begin{equation*}
		v_i(t)(\w)\in \frac{1}{\g}\left[\left\lfloor\frac{2\g}{L_i(t)(\w)}\right\rfloor-1,\left\lfloor\frac{2\g}{L_i(t)(\w)}\right\rfloor+1\right].
		\end{equation*}
		Note that the subsequence may depend on $\w$.}
\end{rmrk}

\begin{rmrk}
	{\rm For the continuum flow it is known that rectangles always shrink to a point; see for example the more general result contained in \cite[Proposition 3.1]{Tay}. The same occurs for any possible limit motion in our discrete model provided the sets vanish in finite time. Indeed, assume by contradiction that $L_i(t^*)(\w)=0$ and $L_{i+1}(t^*)(\w)=a>0$. Then, for any $t<t^*$, by monotonicity of the side-lengths and the velocity estimate in Remark \ref{compactness} there exists a constant $c>0$ such that
		\begin{equation*}
		L_i(t^*)(\w)-L_i(t)(\w)\geq-c(t^*-t).
		\end{equation*}
		By definition of $t^*$ we obtain the bound $L_i(t)\leq c(t^*-t)$. Inserting this bound in the estimate of Remark \ref{compactness} we conclude that, again for any $0<t<t^*$ and a slightly larger constant $c>0$,
		\begin{equation*}
		L_{i+1}(t)(\w)-L_{i+1}(0)(\w)\leq -\int_{0}^{t}\frac{c}{t^*-s}\,\mathrm{d}s=c\log(1-t/t^*).
		\end{equation*}
		Letting $t\uparrow t^*$ we obtain a contradiction.
	} 
\end{rmrk}

\section{Dependence on the range of stationarity}\label{diffstatio}
In the previous section we proved that the velocity is the same as in the unperturbed deterministic case. This fact however changes if we replace the stationarity assumption on all integer shifts $\tau_z$ to a smaller subgroup since the distributions on two neighboring points in the dual lattice can be different. In particular this highlights that the results obtained hitherto are not only due to the additional scaling of the random terms but indeed due to homogenization.

\begin{dfntn}
	Let $m\in\mathbb{N}$. We say that the random field $\{c_{\xi}\}_{\xi}$ is $m$-stationary if
	\begin{equation*}
	c_{\xi}(\tau_{mz}\w)=c_{\xi+mz}(\w)\quad\quad\forall z\in\mathbb{Z}^2.
	\end{equation*}
\end{dfntn}

As we will see there are $2m$ quantities that can affect the velocity. For $k=0,...,m-1$ consider the following variables:
\begin{equation*}
c^{\text{eff}}_{k,|}(\w):=\frac{1}{m}\sum_{j=0}^{m-1}c_{([k]^*,j)}(\w),\quad\quad
\quad
c^{\text{eff}}_{k,-}(\w):=\frac{1}{m}\sum_{j=0}^{m-1}c_{(j,[k]^*)}(\w).
\end{equation*}
To obtain the velocity of the sides of a rectangle we need a generalization of Proposition \ref{allconvergence}.

\begin{prpstn}\label{convergencem}
	Assume that the random field $\{c_{\xi}\}_{\xi}$ is $m$-stationary and $\alpha$-mixing such that (\ref{mixingdecay}) holds. Then there exists a set $\Omega^{\prime}\subset\Omega$ of full probability such that for all $\w\in\Omega^{\prime}$ the following holds: Suppose that a vertical side $S_{j}$ converges in the Hausdorff sense to a limit side $S$ and that for all $j$ we have that the $x$-component $x_j$ of $S_{j}/\e_j$ fulfills
	\begin{equation}\label{goodmodulo}
	x_j=k+\frac{1}{2}\mod{m}\quad \forall j.
	\end{equation} 
	Then it holds that
	\begin{equation*}
	\lim_j p_{\e_j}^{\w}(S_j)=\mathcal{H}^1(S)\mathds{E}[c_{k,|}^{\text{eff}}].
	\end{equation*}
	Moreover the convergence is locally uniform in the following weak sense: there exists $j_0=j_0(\w)$ such that for all $j\geq j_0$ and all sequences of vertical sides $S_j^{\prime}$ such that (\ref{goodmodulo}) holds and $\text{d}_{H}(S_j,S^{\prime}_j)\leq\delta$ we have
	\begin{equation}\label{uniformspeed}
	\left|p_{\e_j}^{\w}(S^{\prime}_j)-\mathcal{H}^1(S_j^{\prime})\mathds{E}[c_{k,|}^{\text{eff}}]\right|\leq C\delta
	\end{equation}	
	for some positive constant $C>0$ independent of $S_j^{\prime}$.
	\\
	The same statement holds for horizontal sides with the condition on the $y$-component and the first moment of $c_{k,-}^{\text{eff}}$.
\end{prpstn}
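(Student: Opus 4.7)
The strategy is to reduce to the stationary case of Proposition \ref{allconvergence} by \emph{blocking} the random variables into packets of size $m$, so as to restore $1$-stationarity in the $y$-direction. For each $k\in\{0,\dots,m-1\}$ introduce
\begin{equation*}
Y_{k,j}(\w):=\frac{1}{m}\sum_{r=0}^{m-1}c_{([k]^*,\,jm+r)}(\w),\qquad j\in\mathbb{Z}.
\end{equation*}
By $m$-stationarity of the field, $\{Y_{k,j}\}_{j\in\mathbb{Z}}$ is stationary under $\tau_{me_2}$, bounded, has mean $\mathds{E}[c_{k,|}^{\text{eff}}]$, and is $\alpha$-mixing with rate $\tilde\alpha(n)\leq\alpha((n-1)m+1)$, still summable by \eqref{mixingdecay}.

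I would then repeat the proof of Proposition \ref{allconvergence} verbatim, replacing $c_{([\pm n]^*,l)}$ by $Y_{k,l}-\mathds{E}[c_{k,|}^{\text{eff}}]$ for each fixed $k$: Theorem \ref{amixing} applied with $p=2$ together with the Borel--Cantelli lemma produce, for every $k\in\{0,\dots,m-1\}$ and every $q\in\mathbb{Q}\cap(0,+\infty)$, a set of full probability on which the corresponding sup-average random variables $X_n^{q,k}$ tend to $0$. Intersecting these sets, together with the analogues of the auxiliary sets $\Omega^z$ defined now only for $z\in[k]^*+m\mathbb{Z}$, yields the desired $\Omega'\subset\Omega$ of full probability. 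For $\w\in\Omega'$ and a sequence of vertical sides $S_j$ satisfying \eqref{goodmodulo} and converging in Hausdorff distance to $S=\{x\}\times\frac{1}{2}[y-\mathcal{H}^1(S),y+\mathcal{H}^1(S)]$, the passage from centered ($y=0$) to non-centered sides proceeds via the Birkhoff transfer already used in the original proof, with the single modification that the auxiliary shift $l_m$ must now be chosen in $m\mathbb{Z}$; this is possible because the slack $6\delta\lfloor y/\e_j\rfloor$ in \eqref{almostshifted} comfortably accommodates a correction of size $O(m)$, and the ergodicity of $\tau_{me_2}$ is still implied by the $\alpha$-mixing hypothesis. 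Since \eqref{goodmodulo} fixes the $x$-coordinate modulo $m$, $m$-stationarity in the first variable identifies the marginal law on every line of $x$-coordinate in $[k]^*+m\mathbb{Z}$ with the one at $[k]^*$, and an $O(1/N)$ boundary correction absorbs the fact that $\mathcal{H}^1(S_j)/\e_j$ need not be an integer multiple of $m$.

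For the local uniformity \eqref{uniformspeed}, observe that if $S_j$ and $S_j'$ both satisfy \eqref{goodmodulo} with the same $k$ and $\text{d}_{H}(S_j,S_j')\leq\delta$, then the symmetric difference $(S_j\Delta S_j')\cap\e_j\mathcal{Z}^2$ contains at most $C\delta/\e_j$ dual lattice points and $|\mathcal{H}^1(S_j)-\mathcal{H}^1(S_j')|\leq 2\delta$; the bound \eqref{pertubationsize} therefore gives $|p_{\e_j}^\w(S_j')-p_{\e_j}^\w(S_j)|\leq C\delta$. Choosing $j_0=j_0(\w,\delta)$ large enough that the pointwise error $|p_{\e_j}^\w(S_j)-\mathcal{H}^1(S_j)\mathds{E}[c_{k,|}^{\text{eff}}]|$ is at most $\delta$ for $j\geq j_0$, the triangle inequality yields \eqref{uniformspeed}. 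The case of horizontal sides is treated symmetrically with $c_{k,-}^{\text{eff}}$ in place of $c_{k,|}^{\text{eff}}$.

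The main subtlety I anticipate lies in the non-centered transfer step: one must verify that constraining the Birkhoff averages and the shift $l_m$ to $m\mathbb{Z}$ still delivers the quantitative estimate \eqref{noncentersmall} with the same rate, and that the resulting null sets can be taken independently of both $k$ and of the sequence $\e_j$, the latter being essential for iterating the statement over the time steps of the minimizing-movement scheme.
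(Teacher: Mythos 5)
Your treatment of the first claim (pointwise convergence) is essentially the paper's own argument: your blocked variables $Y_{k,j}$ coincide with the sequence $z_i(\w)=c_{k,|}^{\text{eff}}(\tau_{ime_2}\w)$ used in the paper, which likewise invokes stationarity of this sequence under $\tau_{me_2}$, summability of the induced mixing rate, Theorem \ref{amixing} with $p=2$, Borel--Cantelli, and then reruns the proof of Proposition \ref{allconvergence} (including the transfer to non-centered sides with the shift taken in $m\mathbb{Z}$). That part is fine.

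The proof of the uniform estimate (\ref{uniformspeed}), however, has a genuine gap. You claim that if $S_j$ and $S'_j$ both satisfy (\ref{goodmodulo}) with the same $k$ and $\text{d}_H(S_j,S'_j)\leq\delta$, then $(S_j\Delta S'_j)\cap\e_j\mathcal{Z}^2$ has at most $C\delta/\e_j$ points, whence $|p^{\w}_{\e_j}(S'_j)-p^{\w}_{\e_j}(S_j)|\leq C\delta$ by (\ref{pertubationsize}). This is only true when $S_j$ and $S'_j$ lie on the \emph{same} vertical line. Condition (\ref{goodmodulo}) fixes the $x$-coordinate only modulo $m$, so for fixed $\delta$ and $j$ large (i.e.\ $m\e_j<\delta$) the side $S'_j$ may sit on any of roughly $2\delta/(m\e_j)$ admissible columns; in that case $S_j$ and $S'_j$ involve disjoint families of random variables, the symmetric difference contains of order $\mathcal{H}^1(S)/\e_j$ dual lattice points, and (\ref{pertubationsize}) only yields a bound of order $1$, not $\delta$. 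This is not a marginal case: in the application to Theorem \ref{mainmstatio} the side $S_{i,j}(t)(\w)$ moves to different columns as $t$ ranges over $I_\delta$, so the different-column situation is exactly the one the uniform estimate must cover. The correct route (the one the paper takes) is not a deterministic perturbation bound but a comparison of $S'_j$ with the sup-average random variables indexed by the column of $S'_j$ itself (the $Z_{n_j}^{q,\pm}$ with $[n_j]^*=x'_j$), checking that the ratio parameter $q$ can be chosen uniformly for all admissible $S'_j$ once $\delta$ is small, that $n_j\to+\infty$ uniformly over such $S'_j$, and handling separately the boundedly many columns near $x=0$. Your triangle-inequality shortcut bypasses precisely the point where the law of large numbers must be applied simultaneously on all nearby congruent columns.
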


\begin{proof}
	The argument to show convergence is very similar to the one used in Proposition \ref{allconvergence} restricted to a thinned dual lattice. We therefore only provide the main steps. We fix $k$ as in (\ref{goodmodulo}), set $\mu_k=\mathds{E}[c_{k,|}^{\text{eff}}]$ and define the two-sided sequence of random variables $\{z_i\}_{i\in\mathbb{Z}}$ via
	\begin{equation*}
	z_i(\w):=c_{k,|}^{\text{eff}}(\tau_{ime_2}\w).
	\end{equation*} 
	Note that this sequence is stationary and $\alpha$-mixing such that (\ref{mixingdecay}) holds. For $q\in\mathbb{Q}\cap(0,+\infty)$ we define the following average sequences:
	\begin{equation*}
	Z_n^{q,\pm}(\w):=\sup_{i\geq qn}\left|\frac{1}{2i+1}\sum_{l=-i}^iz_i(\tau_{\pm nme_1}\w)-\mu_k\right|
	\end{equation*}	
	Using $m$-stationarity and the mixing property we can argue as in the proof of Proposition \ref{allconvergence} to show that there exists a set $\Omega^{\prime\prime}$ of full probability such that all the sequences $Z_n^{q,\pm}$ converge to $0$ pointwise on $\Omega^{\prime\prime}$. Up to minor changes the proof of convergence now is the same as for Proposition \ref{allconvergence}. We omit the details.
	
	In order to prove (\ref{uniformspeed}) we have to distinguish two cases: First assume that the $x$-coordinate (also denoted by $x$) of $S$ is positive (the case of negative $x$-coordinate works the same way). Then, for $\delta$ small enough (otherwise (\ref{uniformspeed}) is trivial), we have $x_j^{\prime}>0$ for $j$ large enough depending only on $S_j$. The key is to show that we can compare $S_j^{\prime}$ to one of the sequences of random variables $Z_n^{q,+}$ as in the proof of Proposition \ref{allconvergence}, where $q$ can be chosen only depending on the sequence $S_j$. Then the speed of convergence is determined by the one of $Z_n^{q,+}$ for one particular $q$.
	
	We start with the case of a vertically centered side $S$, that means $S=\{x\}\times[-\mathcal{H}^1(S)/2,\mathcal{H}^1(S)/2]$. Given $0<\beta<<\delta$ there exists $j_0$ such that for all $j\geq j_0$ we have $x+\beta\geq \e_jx_j$ and $\e_j\#\{\xi\in S_j/\e_j\cap\mathcal{Z}^2\}\geq\mathcal{H}^1(S)-\beta$. Using the assumption $\text{d}_{H}(S_j,S^{\prime}_j)\leq\delta$, a straightforward computation yields
	\begin{equation*}
	\#\{\xi\in \frac{S^{\prime}_j}{\e_j}\cap\mathcal{Z}^2\}\geq \frac{\mathcal{H}^1(S)-2\delta-\beta}{x+\delta+\beta}x_j^{\prime}
	\end{equation*} 
	for all $j\geq j_0$. Therefore we have to chose $\frac{\mathcal{H}^1(S)-2\delta-\beta}{x+\delta+\beta}>3q$ which can be done uniformly for small $\delta$. Moreover, from our assumptions we deduce
	\begin{equation*}
	\#\left(\left\{\xi\in \frac{S^{\prime}_j}{\e_j}\cap\mathcal{Z}^2\right\}\Delta\left\{\xi=(x_j,l):\;|l|\leq\frac{\mathcal{H}^1(S^{\prime}_j)}{2\e_j}\right\}\right)\leq \frac{4\delta}{\e_j}.
	\end{equation*}
	Assuming (\ref{boundedsetting}) we deduce that $\sup_j\mathcal{H}^1(S_j^{\prime})\leq C$. Hence we can argue as in (\ref{withq}) to prove that
	\begin{equation}\label{qcontrol}
	\left| p^{\w}_{\e_j}(S_j^{\prime})-\mathcal{H}^1(S_j^{\prime})\mu_k\right|\leq C\delta+Z_{n_j}^{q,+}(\w),
	\end{equation}	
	where $[n_j]^*=x^{\prime}_j$. Since $|x_j-x_j^{\prime}|\leq \delta/\e_j$ and $\e_jx_j\to x$, for every $n\in\mathbb{N}$ we can find $j_0$ (depending only on $S_j$) such that for all $j\geq j_0$ we have
	\begin{equation*}
	x_j^{\prime}\geq \frac{x/2-\delta}{\e_j}\geq n.
	\end{equation*} 
	Hence $n_j\to +\infty$ and since $Z_n^{q,+}$ converges to $0$ on $\Omega^{\prime\prime}$, (\ref{uniformspeed}) holds in this particular case.
	
	The case of a general side $S=\{x\}\times[y-\mathcal{H}^1(S)/2,y+\mathcal{H}^1(S)/2]$ with $x,y>0$ can be treated with the same arguments as in the derivation of (\ref{noncentersmall}) since this construction is uniformly with respect to small displacements of the limit side. We leave out the details here.
	
	We are left with the case when $x=0$. Again it is enough to consider a centered side $S$ since the other cases can be deduced from this one. Let us take $q$ small enough such that 
	\begin{equation*}
	\frac{\mathcal{H}^1(S)-4\delta}{2\delta}>3q.
	\end{equation*}
	By construction there exists $j_0$ such that for every sequence $S^{\prime}_j$ fulfilling the assumptions we have $\mathcal{H}^1(S^{\prime}_j)/2\e_j>q |x_j^{\prime}|$ for all $j\geq j_0$. Thus, if $x_j^{\prime}$ is not bounded we can control the speed of convergence with the random variables $Z_n^{q,\pm}$ as in (\ref{qcontrol}). Perhaps after enlarging $j_0$, we obtain that
	\begin{equation*}
	|Z_{j}^{q,\pm}(\w)|\leq\delta\quad\forall j\geq j_0.
	\end{equation*}	
	The estimate (\ref{uniformspeed}) now follows from distinguishing the case where $|x_j^{\prime}|>j_0$ for which we can use the above bound and (\ref{qcontrol}) or $|x_j^{\prime}|\leq j_0$ where we have to control finitely many sequences of random variables that converge to $0$ as $S_j\to S$. 	
\end{proof}

Before we state our next theorem, let us derive a suitable expression for the velocity. We remark that due to Proposition \ref{convergencem} the argument is similar to the deterministic case treated in \cite{BrSc}. To reduce notation, we set $\mu_k=\mathds{E}[c_{k,|}^{\text{eff}}]$ and $\lambda_k=\mathds{E}[c_{k,-}^{\text{eff}}]$ and identify the indices modulo $m$ whenever necessary.

We have to minimize the function $v_{i,\e}^{\w}(N)$ given by (\ref{sidevelocity}) which is the correct one describing the velocity if the limit function as $\e_j\to 0$ has a unique minimizer. For the moment we restrict the analysis to the left vertical side. Up to a subsequence, we have that the $x$-component of $s_{i,\e_j}/\e_j$ is constant modulo $m$, that is there exists $n\in 0,...,m-1$ such that
\begin{equation*}
x^i_j=n+\frac{1}{2}\mod m\quad\forall j.
\end{equation*}
If $s_{i,\e_j}$ converges to a limit side of length $L$, then by Proposition \ref{convergencem} we have that along this particular subsequence, it holds that
\begin{align}\label{limitvelocitym}
v_{i,\e_j}^{\w}(N)\to v_i^{n,L}(N):=-2N+L\mu_k+\frac{L}{2\g}(N+1)N &&\mbox{if $N+n=k\mod m$.}
\end{align}
As we will show in the following, we can define an effective velocity which does not depend on the particular subsequence. Setting $N^*=\lfloor 2\g/L\rfloor$, as an analogue of (\ref{only3}) we have
\begin{equation}\label{just3}
\min_N v_i^{n,L}(N)=\min\{v_i^{n,L}(N^*),\,v_i^{n,L}(N^*+1),\,v_i^{n,L}(N^*-1)\}.
\end{equation} 
Since a precise analysis of the minimization process is only possible provided the limit functional has a unique minimizer, let us check when this is the case. There are three equivalences that turn out to be useful to characterize the lack of uniqueness. Write $N^*=2\g/L-\xi$ with $\xi\in[0,1)$ and suppose that $N^*+n=k^*\mod m$. Then it holds
\begin{align}
& v_i^{n,L}(N^*)\leq v_i^{n,L}(N^*+1)&\iff& \quad\quad\xi\leq 1+\g(\mu_{k^*+1}-\mu_{k^*}),&\nonumber
\\
& v_i^{n,L}(N^*)\leq v_i^{n,L}(N^*-1)&\iff& \quad\quad\xi\geq\g(\mu_{k^*}-\mu_{k^*-1}),&\label{charmin}
\\
& v_i^{n,L}(N^*+1)\leq v_i^{n,L}(N^*-1)&\iff& \quad\quad\xi\geq\frac{1}{2}+\frac{\g}{2}(\mu_{k^*+1}-\mu_{k^*-1}).&\nonumber
\end{align}
Thus minimizers are not unique if and only if
\begin{itemize}
	\item[(i)] $\g(\mu_{k^*+1}-\mu_{k^*})+1=\xi\geq\frac{\g}{2}(\mu_{k^*+1}-\mu_{k^*-1})+\frac{1}{2}$,
	\item[(ii)] $\g(\mu_{k^*}-\mu_{k^*-1})=\xi\leq \frac{\g}{2}(\mu_{k^*+1}-\mu_{k^*-1})+\frac{1}{2}$,
	\item[(iii)] $\xi=\frac{1}{2}=\g(\mu_{k^*}-\mu_{k^*-1})=\g(\mu_{k^*}-\mu_{k^*+1})$, 
\end{itemize}
where we left out those inequalities with no information. Due to (\ref{pertubationsize}) the third possibility cannot occur and also the inequalities in (i) and (ii) are always fulfilled since $\mu_{k+1}-\mu_{k-1}=(\mu_{k+1}-\mu_k)+(\mu_k-\mu_{k-1})$. In particular the set of side lengths where the minimization problem (\ref{just3}) has not a unique solution is discrete. The same analysis for the remaining sides yields the following singular side lengths:
\begin{align*}
\mathcal{S}^l_{|}&:=\left\{L\in (0,+\infty):\;2\g/L\in \mathbb{N}_0+\g(\mu_k-\mu_{k-1})\quad\text{for some }k\right\},
\\
\mathcal{S}^r_{|}&:=\left\{L\in (0,+\infty):\;2\g/L\in \mathbb{N}_0-\g(\mu_k-\mu_{k-1})\quad\text{for some }k\right\},
\\
\mathcal{S}^l_{-}&:=\left\{L\in (0,+\infty):\;2\g/L\in \mathbb{N}_0+\g(\lambda_k-\lambda_{k-1})\quad\text{for some }k\right\},
\\
\mathcal{S}^u_{-}&:=\left\{L\in (0,+\infty):\;2\g/L\in \mathbb{N}_0-\g(\lambda_k-\lambda_{k-1})\quad\text{for some }k\right\}.
\end{align*}
Whenever it is clear from the context, we associate to a side $S_i$ the corresponding set $\mathcal{S}_i\in\{\mathcal{S}^l_{|},\mathcal{S}^r_{|},\mathcal{S}^l_{-},\mathcal{S}^u_{-}\}$.

Now let us analyze the minimization scheme. Again we illustrate the procedure only for the left vertical side. To this end we fix $L\notin\mathcal{S}^l_{|}$. Setting $X_0=x_j^i$, we will see that the motion of the corresponding left vertical side will be given locally by the following algorithm:
\\
For $l=0,1,...$ set
\begin{enumerate}
	\item[] $n_l:=X_l-\frac{1}{2}\mod m,$
	\item[] $N_{l+1}=\underset{N}{\text{argmin }}v_i^{n_l,L}(N),$
	\item[] $X_{l+1}:=X_l+N_{l+1},$
\end{enumerate}
where $v_i^{n,L}$ is defined in (\ref{limitvelocitym}). This algorithm is well-defined for $L\notin\mathcal{S}_{|}^l$ and gives rise to an effective velocity as shown in the lemma below:
\begin{lmm}\label{homogenvelocity}
	There exist nonnegative integer numbers $\tilde{l},\,T,\,M$ such that $\tilde{l}+T\leq m$ and 
	\begin{equation*}
	X_{l+T}-X_l=Mm\quad\forall l\geq\tilde{l}.
	\end{equation*}	
	Moreover, the quotient $M/T$ does not depend on $X_0$.
\end{lmm}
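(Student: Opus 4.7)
Since $L \notin \mathcal{S}^l_{|}$ guarantees a unique minimizer, the map $f(n) := \underset{N}{\mathrm{argmin}}\,v_i^{n,L}(N)$ is well-defined, and depends on $n$ only through $n \bmod m$ because $v_i^{n,L}$ itself does. Hence the residues $n_l = X_l - \tfrac12 \bmod m$ evolve by the deterministic self-map $\phi(n) := n + f(n) \bmod m$ of the finite set $\mathbb{Z}/m\mathbb{Z}$. Among the $m+1$ values $n_0,\ldots,n_m$, two must coincide by pigeonhole; taking the first such repetition yields $\tilde{l} + T \leq m$ and $T \geq 1$ with $n_{\tilde l} = n_{\tilde l + T}$. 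From then on $n_l$ is purely periodic of period $T$, so $X_{l+T} - X_l = \sum_{k=0}^{T-1} f(n_{l+k})$ is constant in $l \geq \tilde l$; since $n_{l+T} \equiv n_l \pmod m$ forces this constant to lie in $m\mathbb{Z}$, we can write it as $Mm$. This gives the first assertion.

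To prove that $M/T$ does not depend on $X_0$, I would pass to the lift $\tilde\phi : \mathbb{Z} \to \mathbb{Z}$, $\tilde\phi(x) := x + f(x \bmod m)$, which commutes with the $m$-shift by construction. Once $\tilde\phi$ is known to be weakly order-preserving, the standard discrete rotation-number argument applies: for any $X_0, Y_0$ with $|X_0 - Y_0| \leq m$, monotonicity combined with $m$-periodicity gives $|\tilde\phi^l(X_0) - \tilde\phi^l(Y_0)| \leq m$ for every $l$, so $\lim_l X_l/l$ exists and is the same for all initial data. Since each trajectory is eventually $T$-periodic with increment $Mm$ per cycle, this common rotation number equals $Mm/T$, forcing $M/T$ to be $X_0$-independent.

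The main obstacle is therefore establishing $|f(n+1) - f(n)| \leq 1$ for every $n \in \mathbb{Z}/m\mathbb{Z}$. Writing $f(n) = N^* + \delta(n)$ with $\delta(n) \in \{-1,0,+1\}$ by (\ref{only3}), and setting $g(N) := \frac{L}{2\g}N(N+1) - 2N$ and $a_k := L\mu_k$, so that $v_i^{n,L}(N) = g(N) + a_{(N+n) \bmod m}$, I must rule out the two transitions $(\delta(n), \delta(n+1)) = (+1,-1)$ and $(-1,+1)$. For $(+1,-1)$, adding the optimality inequalities $v_i^{n,L}(N^*+1) < v_i^{n,L}(N^*)$ and $v_i^{n+1,L}(N^*-1) < v_i^{n+1,L}(N^*)$ cancels the $a$-contributions and yields $0 < 2g(N^*) - g(N^*+1) - g(N^*-1) = -L/\g$, a direct contradiction from the strict convexity of $g$. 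For $(-1,+1)$, the analogous inequalities give $a_{N^*+n} - a_{N^*+n-1} > 2 - (L/\g)N^*$ and $a_{N^*+n+1} - a_{N^*+n+2} > (L/\g)(N^*+1) - 2$; combining each with the uniform spread $|a_i - a_j| < L/(2\g)$, which follows from (\ref{pertubationsize}) via $|\mu_k| < 1/(4\g)$, produces the incompatible conclusions $\{2\g/L\} < 1/2$ and $\{2\g/L\} > 1/2$ on the fractional part of $2\g/L$. This is precisely where the bound (\ref{pertubationsize}) enters in an essential way.
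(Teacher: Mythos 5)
Your proof is correct and follows essentially the same route as the paper's: pigeonhole on $\mathbb{Z}/m\mathbb{Z}$ for eventual periodicity, orbit monotonicity obtained by ruling out the transition from $N^*+1$ to $N^*-1$ by adding the two optimality inequalities, and identification of the common long-time average by comparing an orbit with its translate by $m$, which the dynamics commutes with. The only (harmless) excess is your exclusion of the $(-1,+1)$ transition: weak order preservation of the lift only requires $f(n+1)\geq f(n)-1$, so that case is not needed and, contrary to your closing remark, the bound (\ref{pertubationsize}) does not enter the proof of this lemma in an essential way.
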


\begin{proof}
	Observe that the quotient space $\mathbb{Z}_{/m\mathbb{Z}}$ has only $m$ distinct elements so that the first statement holds. For the second statement we first establish a monotonicity property of the orbits with respect to the initial data $X_0$. To this end let $X_0\leq X_0^{\prime}$. We argue inductively. Due to (\ref{just3}) it is clear that $X_1\leq X_1^{\prime}$ in case that $X_0=X_0^{\prime}$ or $X_0^{\prime}-X_0\geq 2$. It remains the case where $X_0^{\prime}-X_0=1$. We assume by contradiction that $X_1>X_1^{\prime}$. Writing $N^*=2\g/L-\xi$, the minimizer to determine $X_1$ would be given by $N^*+1$ while for $X_1^{\prime}$ minimizing yields $N^*-1$. Using minimality one easily derives that in this case we have
	\begin{equation*}
	\g(\mu_{k^*+1}-\mu_{k^*})\geq\xi\geq \g(\mu_{k^*+1}-\mu_{k^*})+1,
	\end{equation*}
	where $k^*=X_0-\frac{1}{2}+N^*\mod m$. This gives a contradiction. By iteration we obtain that $X_k\leq X_k^{\prime}$ for all $k$. Now we argue as in Proposition 3.6 in \cite{BrSc} by comparing the long-time behavior of the orbits with starting points $X_0,X_0^{\prime}$ and $X_0+m$. For $L,l_0\in\mathbb{N}$ we let $k=l_0+LT(x_0)T(x_0^{\prime})$. By the first part of the proof and orbit monotonicity, for $l_0$ large enough it holds that
	\begin{equation*}
	X_{l_0}+LT(x_0^{\prime})M(x_0)m\leq X^{\prime}_{l_0}+LT(x_0)M(x_0^{\prime})m\leq X_{l_0}+LT(x_0^{\prime})M(x_0)m+m.
	\end{equation*}
	Dividing this inequality by $L$ and letting $L\to +\infty$ yields the claim.
\end{proof}

\begin{dfntn}
	For a given type of side with length $L\notin \mathcal{S}_i$, let $M_i,T_i$ be as in Lemma \ref{homogenvelocity}, where $T_i$ is chosen to be minimal. The effective velocity for a side $S_i$ is defined as a function $v_{i}^{\text{eff}}:(0,+\infty)\backslash\mathcal{S}_i\rightarrow [0,+\infty)$ by
	\begin{equation*}
	v_i^{\text{eff}}(L)=\frac{M_im}{T_i}.
	\end{equation*}	
	In view of Lemma \ref{homogenvelocity}, this function is well-defined.  
\end{dfntn}

\begin{rmrk}
	{\rm In contrast to the deterministic environments considered in \cite{BrSc,Scilla} in our setting the effective velocity of two opposite sides can be different. However this is not due to random effects but can already be caused by a slightly more complex periodic structure as shown in the following example.}
\end{rmrk}

\begin{xmpl}\label{centermoves}
	Let $m=6$ and let $c_{\xi}$ be a (maybe deterministic) field such that 
	\begin{align*}
	\mu_0=-\frac{1}{8\g},\quad\mu_1=\mu_2=\mu_5=\frac{1}{8\gamma}\quad\mu_3=\mu_4=0.
	\end{align*}
	If $2\g/L\in (3-\frac{1}{8},3)$, then the left side of a rectangle moves faster than the right side, namely 
	\begin{equation*}
	v_i^{\text{eff}}(L)=3>2=v_{i+2}^{\text{eff}}(L).
	\end{equation*}
\end{xmpl}

\begin{proof}
	This follows from a straightforward computation based on the minimality criteria (\ref{charmin}). Indeed, if the left side starts at $n_0=0$, then we have $N_1=N_2=3$. If the right side starts also at $n_0=0$ we deduce that $N_1=N_2=N_3=2$. We leave the details of the computation to the interested reader.	
\end{proof}

Let us now compute the pinning threshold, that is the critical side length above which a side does not move after a finite number of time steps (or equivalently $v_i^{\text{eff}}(L)=0$). Due to (\ref{just3}) a necessary condition is given by $L>\g$.  We then have to compare the values of $N\in\{0,1,2\}$. For an arbitrary starting position of a left vertical side we obtain the conditions
\begin{equation*}
L>\frac{2\g}{1+\g(\mu_{k+1}-\mu_k)},
\quad\quad
L>\frac{4\g}{3+\g(\mu_{k+2}-\mu_k)}.
\end{equation*}   
As we can chose the index $k$, the pinning threshold for a left vertical side is given by
\begin{equation*}
\overline{L}_i=\min_k\left\{\max\left\{\frac{2\g}{1+\g(\mu_{k+1}-\mu_k)},\frac{4\g}{3+\g(\mu_{k+2}-\mu_k)}\right\}\right\}>\g.
\end{equation*}
The pinning thresholds for the other sides are given by
\begin{align*}
\overline{L}_{i+1}&=\min_k\left\{\max\left\{\frac{2\g}{1+\g(\lambda_{k-1}-\lambda_{k})},\frac{4\g}{3+\g(\lambda_{k-2}-\lambda_{k})}\right\}\right\},
\\
\overline{L}_{i+2}&=\min_k\left\{\max\left\{\frac{2\g}{1+\g(\mu_{k-1}-\mu_k)},\frac{4\g}{3+\g(\mu_{k-2}-\mu_k)}\right\}\right\},
\\
\overline{L}_{i+3}&=\min_k\left\{\max\left\{\frac{2\g}{1+\g(\lambda_{k+1}-\lambda_k)},\frac{4\g}{3+\g(\lambda_{k+2}-\lambda_k)}\right\}\right\},
\end{align*}
where the indices rotate clockwise. The next lemma contains some properties of the effective velocities. We remark that the same results have been obtain in \cite{BrSc} but we find it difficult to reproduce the argument in our slightly more complex setting. Therefore we provide a different proof.
\begin{lmm}\label{properties}
	The velocity functions $v_{i}^{\text{eff}}$ satisfy the following properties:	
	\begin{enumerate}
		\item[(a)] $v_{i}^{\text{eff}}$ is constant on each interval contained in $(0,+\infty)\backslash\mathcal{S}_i$.
		\item[(b)] $v_i^{\text{eff}}(L)=0$ if $L>\overline{L}_i$.
		\item[(c)] $v_i^{\text{eff}}(\cdot)$ is non-increasing in $L$.
	\end{enumerate}
\end{lmm}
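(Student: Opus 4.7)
My plan is to reduce all three claims to a single elementary observation about the discrete first difference of $v_i^{n,L}$ in $N$. A direct computation gives
\begin{equation*}
\Delta_N(L) := v_i^{n,L}(N+1) - v_i^{n,L}(N) = -2 + \tfrac{L}{\gamma}\bigl[(N+1) + \gamma(\mu_{n+N+1} - \mu_{n+N})\bigr],
\end{equation*}
where the bracket stays in $(N+\tfrac{1}{2}, N+\tfrac{3}{2})$ by (\ref{pertubationsize}). Hence $L \mapsto \Delta_N(L)$ is affine and strictly increasing, and its unique zero $L_N^n$ satisfies $2\gamma/L_N^n = (N+1) + \gamma(\mu_k - \mu_{k-1})$ with $k = n+N+1$, which, comparing with the definition of $\mathcal{S}_{|}^l$, places it inside $\mathcal{S}_i$. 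A second computation gives $\Delta_{N+1}(L) - \Delta_N(L) \geq 0$ for every $L > 0$ (again from (\ref{pertubationsize})), so $v_i^{n,L}$ is convex in $N$, and for $L \notin \mathcal{S}_i$ the unique minimizer is $N_{\mathrm{opt}}(n,L) = \min\{N \geq 0 : \Delta_N(L) > 0\}$.

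For (a), on any connected interval $I \subset (0,\infty) \setminus \mathcal{S}_i$ none of the zeros $L_N^n$ lies in $I$, so by continuity the sign of each $\Delta_N$ is constant on $I$ for every $n, N$. Therefore $N_{\mathrm{opt}}(n,\cdot)$ is constant on $I$ for every $n$, and so is the one-step map of the algorithm $\Phi_L(X) := X + N_{\mathrm{opt}}(X\bmod m, L)$. The entire orbit $(X_l)$ and hence $M_i$, $T_i$, $v_i^{\mathrm{eff}}$ do not depend on $L \in I$. For (b), I would pick $k$ realising the minimum in the definition of $\overline{L}_i$. The assumption $L > \overline{L}_i$ gives in particular $L > 2\gamma/(1 + \gamma(\mu_{k+1} - \mu_k))$, which is precisely $\Delta_0(L) > 0$ for $n = k$; convexity then forces $\Delta_N(L) > 0$ for all $N \geq 0$, so $N_{\mathrm{opt}}(k,L) = 0$. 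Starting the algorithm at $X_0 \equiv k \pmod m$ therefore produces the fixed orbit $X_l \equiv k$, so $M = 0$, $T = 1$, $v_i^{\mathrm{eff}}(L) = 0$, and Lemma \ref{homogenvelocity} guarantees that this value is independent of the starting position.

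For (c), given $L \leq L'$, strict monotonicity of $\Delta_N(\cdot)$ yields $\{N : \Delta_N(L) > 0\} \subset \{N : \Delta_N(L') > 0\}$, hence $N_{\mathrm{opt}}(n,L) \geq N_{\mathrm{opt}}(n,L')$ and $\Phi_L(X) \geq \Phi_{L'}(X)$ for every $X$. Combined with the monotonicity $X \leq X' \Rightarrow \Phi_L(X) \leq \Phi_L(X')$ already proved in Lemma \ref{homogenvelocity}, a routine induction gives $\Phi_L^l(X_0) \geq \Phi_{L'}^l(X_0)$ for every $l$. Dividing by $l$ and passing to the limit (using $X_l \sim l\, v_i^{\mathrm{eff}}(L)$ from Lemma \ref{homogenvelocity}), I conclude $v_i^{\mathrm{eff}}(L) \geq v_i^{\mathrm{eff}}(L')$.

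The main obstacle I anticipate is bookkeeping rather than analysis: the four side labels $\mathcal{S}^l_{|}$, $\mathcal{S}^r_{|}$, $\mathcal{S}^l_{-}$, $\mathcal{S}^u_{-}$ carry different signs because opposite sides run the algorithm in opposite lattice directions, and one must verify in each case that the associated zeros of $\Delta_N$ really land inside the corresponding $\mathcal{S}_i$. All the analytic content, however, reduces to the affine behaviour of $\Delta_N(L)$ in $L$, which is an immediate consequence of the quantitative bound (\ref{pertubationsize}).
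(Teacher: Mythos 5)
Your proposal is correct, and its overall skeleton matches the paper's: (a) via local constancy of the minimizers on intervals avoiding $\mathcal{S}_i$, (b) from the definition of $\overline{L}_i$, and (c) from monotonicity of the argmin in $L$ combined with the orbit monotonicity already established in Lemma \ref{homogenvelocity}. Where you genuinely diverge is in the mechanism: the paper handles the elementary minimization facts by reducing to the three candidates $N^*-1,N^*,N^*+1$ via (\ref{just3}) and checking the explicit inequalities (\ref{charmin}) "by a direct calculation" (for (c)) or by a continuity-and-uniqueness compactness argument (for (a)), whereas you extract everything from the single observation that $N\mapsto v_i^{n,L}(N)$ is strictly discretely convex and that $\Delta_N(\cdot)$ is affine and increasing in $L$ with its zero lying in $\mathcal{S}_i$. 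This is cleaner and buys two things the paper leaves implicit: the characterization $N_{\mathrm{opt}}(n,L)=\min\{N\ge 0:\Delta_N(L)>0\}$ makes the inclusion $\{N:\Delta_N(L)>0\}\subset\{N:\Delta_N(L')>0\}$ an immediate proof of (\ref{velmonotonicity}) with no case analysis, and it shows that the second term $4\g/(3+\g(\mu_{k+2}-\mu_k))$ in the definition of $\overline{L}_i$ is in fact always dominated by the first (convexity gives $v_i^{n,L}(0)<v_i^{n,L}(1)\Rightarrow v_i^{n,L}(0)<v_i^{n,L}(2)$), which is why your (b) legitimately uses only the $\Delta_0$ condition. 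The one point worth writing out is the strictness of the second difference: $\Delta_{N+1}(L)-\Delta_N(L)=\frac{L}{\g}\bigl(1+\g(\mu_{n+N+2}-2\mu_{n+N+1}+\mu_{n+N})\bigr)$, and the perturbation term is bounded in absolute value by $\g\cdot 4\cdot\frac{1}{4\g}=1$ only because the inequality in (\ref{pertubationsize}) is strict; convexity sits exactly at the edge of the assumed bound, mirroring the paper's remark that (\ref{pertubationsize}) is optimal.
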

\begin{proof}
	To prove the first assertion, fix an interval $I\subset (0,+\infty)\backslash\mathcal{S}_i$ and let $L\in I$. We claim that there exists an open interval $I_L$ around $L$ such that for all $n$ and all $L^{\prime}\in I_L$ the unique minimizers of $v_i^{n,L^{\prime}}$ agree with the unique minimizer of $v_i^{n,L}$. As $I$ is connected, it then follows that the minimizers are the same for all $L^{\prime}\in I$ and we conclude by iteration. To prove the claim, it is enough to observe that whenever $L_j\to L$, it follows that $v_i^{n,L_j}(N)\to v_i^{n,L}(N)$ pointwise. Due to (\ref{just3}) also the minimizers are bounded. By uniqueness they converge to the minimizer of the limit function.
	
	The second assertion is an immediate consequence of the definition of the pinning threshold.
	
	To prove the monotonicity, take $L>L^{\prime}$. The claim follows from the fact that, for every $n$, in a multi-valued sense it holds that
	\begin{equation}\label{velmonotonicity}
	\underset{N}{\text{argmin }}v_i^{n,L}(N)\leq \underset{N}{\text{argmin }}v_i^{n,L^{\prime}}(N).
	\end{equation}
	Indeed, observe that $N(L):=\lfloor 2\gamma/L\rfloor\leq\lfloor 2\gamma/L^{\prime}\rfloor=:N(L^{\prime})$. Then by (\ref{just3}) it suffices to treat the two cases $N(L)=N(L^{\prime})$ and $N(L)+1=N(L^{\prime})$. In any case, again applying (\ref{just3}) there are only finitely many options for violating (\ref{velmonotonicity}) that can be ruled out by a direct calculation based on a characterization as in (\ref{charmin}). We omit the details.
\end{proof}
Now we are in a position to state the main theorem for $m$-stationary fields under the same $\alpha$-mixing hypothesis as in Theorem \ref{motionthm1}.

\begin{thrm}\label{mainmstatio}
	Assume that the random field $\{c_{\xi}\}_{\xi}$ satisfies (\ref{pertubationsize}), is $m$-stationary and $\alpha$-mixing such that (\ref{mixingdecay}) holds. Then with probability $1$ the following holds: Let $\e_j\downarrow 0$ and let $A_{j}^0(\w)\in\mathcal{A}_{\e_j}$ be a coordinate rectangle with sides $S_{1,j}(\w),...,S_{4,j}(\w)$. Assume that $A_{j}^0(\w)$ converges in the Hausdorff metric to a coordinate rectangle $A(\w)$. Then we can choose a subsequence (not relabeled), such that $A_{\e_j}(t)(\w)$ converges locally in time to $A(t)(\w)$, where $A(t)(\w)$ is a coordinate rectangle with sides $S_i(t)(\w)$ such that $A(0)(\w)=A(\w)$. Each side $S_i(t)(\w)$ moves inward with velocity $v_i(t)(\w)$ solving the following inclusions:
	\begin{equation*}
	v_i(t)(\w)
	\begin{cases}
	=\frac{1}{\g}v_{i}^{\text{eff}}(L_i(t)(\w)) &\mbox{if $L_i(t)(\w)\notin\mathcal{S}_{i}$,}\\\\
	\in \frac{1}{\g}\left[(v_{i}^{\text{eff}})^{(-)}(L_i(t)(\w)),(v_{i}^{\text{eff}})^{(+)}(L_i(t)(\w))\right] &\mbox{otherwise,}
	\end{cases}
	\end{equation*}
	where $L_i(t)(\w):=\mathcal{H}^1(S_i(t)(\w))$ denotes the length of the side $S_i(t)(\w)$. The inclusions are valid until the extinction time when $L_i(t)(\w)=0$.
\end{thrm}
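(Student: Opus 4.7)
The plan is to mimic closely the proof of Theorem \ref{motionthm1}, substituting the uniform convergence statement (\ref{uniformspeed}) of Proposition \ref{convergencem} for Proposition \ref{allconvergence} and replacing the explicit one-parameter minimization of $v_{i,\e}^{\w}$ by the iterative algorithm preceding Lemma \ref{homogenvelocity}. I would start by fixing $\w$ in the full-probability set $\Omega^{\prime}$ of Proposition \ref{convergencem}, apply Proposition \ref{stillrectangle} to keep the discrete sets rectangular, and use the equiboundedness (\ref{coercivity}) of the one-step discrete velocities $N_{i,\e_j}^k(\w)$ to extract a subsequence along which the piecewise affine interpolations $L_i^j(\cdot)(\w)$ of side lengths converge locally uniformly to Lipschitz functions $L_i(\cdot)(\w)$ on $[0,t^*)$, where $t^*>0$ is the first extinction time. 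Exactly as in the proof of Theorem \ref{motionthm1}, this yields Hausdorff convergence of $A_{\e_j}(t)(\w)$ to a coordinate rectangle $A(t)(\w)$ on $[0,t^*)$.

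To identify the velocity at a regular time $t_0\in(0,t^*)$ with $L_i(t_0)(\w)\notin\mathcal{S}_i$ for every side, I would use Lemma \ref{properties}(a) and continuity of the limit lengths to choose $\eta>0$ so that $L_i(t)(\w)$ -- and, for $j$ large enough, $L_i^j(t)(\w)$ -- remains within a single interval of constancy of $v_i^{\text{eff}}$ for $|t-t_0|\le \eta$. After a further subsequence extraction I may assume that the $x$-component $x_j^i$ of the starting position of each vertical side (and analogously for the horizontal ones) is constant modulo $m$. Then the uniform estimate (\ref{uniformspeed}) from Proposition \ref{convergencem}, together with the error bounds (\ref{errors}) on (\ref{minvelocity}), allows me to identify the functional $v_{i,\e_j}^{\w}$ at every step $k$ with the limit $v_i^{n_k,L_i^j(k\tau_j)(\w)}$ introduced in (\ref{limitvelocitym}), uniformly across the window of steps $k\tau_j\in[t_0-\eta,t_0+\eta]$. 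Since $L_i(t_0)(\w)\notin\mathcal{S}_i$ rules out the degeneracies (i)--(iii) that follow (\ref{charmin}), uniqueness of the minimizer is preserved throughout the window, and the discrete dynamics of $x_j^{i,k}$ coincide, for all $j$ large enough, with the iteration $X_{l+1}=X_l+N_{l+1}$ of the algorithm preceding Lemma \ref{homogenvelocity}.

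With this identification in hand, Lemma \ref{homogenvelocity} supplies a cumulative displacement of $M_i m\e_j$ for side $S_i$ over every block of $T_i$ consecutive time steps, so the averaged discrete velocity equals $M_i m/(T_i\gamma)=v_i^{\text{eff}}(L_i(t_0)(\w))/\gamma$. Integrating (\ref{movement}) over the time window and passing to the weak-$*$ limit in $L^\infty_{\text{loc}}$, one obtains the analogue of (\ref{integralform}) and the claimed velocity $v_i(t_0)(\w)=v_i^{\text{eff}}(L_i(t_0)(\w))/\gamma$ wherever $L_i(t_0)(\w)\notin\mathcal{S}_i$. For the (at most countable) set of times with $L_i(t)(\w)\in\mathcal{S}_i$, the differential inclusion is obtained as in the closing paragraph of the proof of Theorem \ref{motionthm1}: combining pointwise constancy of the admissible limit minimizers on each side of such a singular value with weak-$*$ convergence, and invoking the monotonicity of Lemma \ref{properties}(c) to sandwich the averages of $N_{i,\e_j}^k(\w)$ between $(v_i^{\text{eff}})^{(-)}$ and $(v_i^{\text{eff}})^{(+)}$ at $L_i(t)(\w)$.

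The main obstacle I anticipate is the passage from step-by-step microscopic dynamics to the macroscopic averaged velocity. Unlike in Theorem \ref{motionthm1}, the limit functional changes with the position $X_l$ modulo $m$, so one must track the discrete orbit over blocks of length $T_i$ rather than look at a single step; and over the $\sim \eta/\tau_j$ steps needed to detect non-trivial motion, both $L_i^j(t)(\w)$ and the side's $x$-position drift. The constancy of $v_i^{\text{eff}}$ on an open interval (Lemma \ref{properties}(a)) is what freezes the algorithm's output under this $L$-drift, while the uniform estimate (\ref{uniformspeed}) -- which holds simultaneously for all displaced sides close to a limit side -- is what freezes it under the position drift. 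Securing both simultaneously, with constants depending only on $\w$ and the initial data, is the crux of the argument; once this is done, Lemma \ref{homogenvelocity} delivers the required periodic cancellation and the homogenized velocity $v_i^{\text{eff}}$ emerges in the limit.
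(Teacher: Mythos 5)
Your proposal follows essentially the same route as the paper: restrict to the full-probability set of Proposition \ref{convergencem}, localize in time to an interval where the side length stays in a single constancy interval of $v_i^{\text{eff}}$, identify the discrete steps with the deterministic algorithm via (\ref{uniformspeed}), average over blocks using Lemma \ref{homogenvelocity}, and sandwich at singular lengths by orbit monotonicity. The only point to watch is your appeal to the analogue of (\ref{integralform}): since opposite sides may move at different speeds here (Example \ref{centermoves}), the velocity of $S_i$ must be read off from the side positions, equivalently from the weak$*$ limit of $N_i^j$ itself --- which your block argument already supplies --- and not deduced from the rate of change of the adjacent side lengths.
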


\begin{proof}
	Due to Remark \ref{compactness} we only have to derive the formula for the velocities. We fix $\w\in\Omega^{\prime}$ given by Proposition \ref{convergencem}. Using the same notation as in the proof of Theorem \ref{motionthm1}, we have to identify the weak*-limit $N_i$ of $N_i^j(\cdot)$ on the interval $(0,t^*)$. Therefore we fix $t_1\in (0,t^*)$ such that $L_i(t_1)(\w)\notin\mathcal{S}_i$. Given $\delta>0$ there exists an open interval $I_{\delta}\ni t_1$ and $j_0$ such that for all $j\geq j_0$
	\begin{enumerate}
		\item[(i)] $L_{i}^j(t)(\w)\notin \mathcal{S}_i\quad\forall t\in I_{\delta}$,
		\item[(ii)] $\text{d}_H(S_{i,j}(t)(\w),S_{i,j}(t_1)(\w))\leq\delta\quad\forall t\in I_{\delta}$.
	\end{enumerate}	
	Hence, by Proposition \ref{convergencem} we may assume that for $j\geq j_0$ and $t\in I_{\delta}$ there exists $n=n(j,t)$ such that for $L=L_{i}^j(t)(\w)$
	\begin{equation*}
	N_i^j(t)(\w)=\underset{N}{\text{argmin }}v_i^{n,L}(N),
	\end{equation*}
	where $v_i^{n,L}$ is defined in (\ref{limitvelocitym}). Since without loss of generality $L_i^j(t)(\w)$ is in the same interval contained in $(0,+\infty)\backslash\mathcal{S}_i$ as $L_i^j(t_1)(\w)$, we infer from the Lemmata \ref{homogenvelocity} and \ref{properties} (a) that
	\begin{align}
	\int_{I_{\delta}}N_i(s)(\w)\,\mathrm{d}s&=
	\lim_j\int_{I_{\delta}}N_i^j(s)(\w)\,\mathrm{d}s=\lim_j\sum_{k\tau_j\in I_{\delta}}\tau_jN_i^j(k\tau_j)(\w)+{\mathcal{O}}(\tau_j)\nonumber
	\\
	&=\lim_j |I_{\delta}|v_i^{\text{eff}}(L_i(t_1)(\w))+{\mathcal{O}}(\tau_j)=|I_{\delta}|v_i^{\text{eff}}(L_i(t_1)).\label{passlimit}
	\end{align}
	Dividing by $|I_{\delta}|$ and letting $\delta\to 0$ we obtain the claim using Lebesgue's differentiation theorem. Note that similar to the proof of Theorem \ref{motionthm1} the formula for the velocity holds for every such $t_1$ since $N_i$ has a constant representative locally near $t_1$ so that the side positions are differentiable in the classical sense. However here we have to take the side positions and cannot deduce the velocity from the side lengths since the center might move (see Example \ref{centermoves}).
	
	It remains the case where $L_i(t_1)(\w)\in\mathcal{S}_i$. Note that by (\ref{velmonotonicity}) we still have the monotonicity of orbits. That means if $L^-,L^+\in(0,+\infty)\backslash\mathcal{S}_i$ are in the two intervals enclosing $L_i(t_1)(\w)$ such that $L^-<L_i(t_1)(\w)<L^+$ and we start the algorithm for computing the effective velocity with the same initial datum choosing the minimizer arbitrarily in the case of non-uniqueness, we have
	\begin{equation*}
	X^+_k\leq X_k\leq X^-_k.
	\end{equation*}
	This yields	
	\begin{equation*}
	|I_{\delta}|v_i^{\text{eff}}(L^+)\leq\int_{I_{\delta}}N_i(s)(\w)\,\mathrm{d}s\leq|I_{\delta}|v_i^{\text{eff}}(L^-).
	\end{equation*}	
	The claim follows after dividing by $|I_{\delta}|$, sending $\delta\to 0$ and then taking both the limits as $L^-\uparrow L_i(t_1)(\w)$ and $L^+\downarrow L_i(t_1)(\w)$ for which we use monotonicity.	
\end{proof}

Again we have several cases where a unique limit motion exists. However the equations differ since the velocity of two opposite sides may be not equal. We don't list all possible cases where there is a unique motion.

\begin{crllr}
	Let $A_{\e}^0(\w)$ and $\{c_{\xi}\}_{\xi}$ be as in Theorem \ref{mainmstatio}. Assume in addition that the lengths $L_1^0(\w),L_2^0(\w)$ of $A(\w)$ satisfy one of the three following conditions (we assume that $L^0_1(\w)\leq L^0_2(\w)$ and $\overline{L}_1\leq\overline{L}_3$ as well as $\overline{L}_2\leq\overline{L}_4$):
	\begin{enumerate}
		\item[(i)] $L_i^0(\w)>\overline{L}_i$ (total pinning),
		\item[(ii)] $L_1^0(\w)<\overline{L}_1$ and $L_2^0(\w)\leq \overline{L}_2$ (vanishing in finite time),
		\item[(iii)] $\overline{L}_1<L_1^0(\w)<\overline{L}_3$ and $L_1^0(\w)\notin\mathcal{S}_3$, and $L_2^0(\w)>\overline{L}_4$ (partial pinning).
	\end{enumerate}	
	Then with probability $1$ the following holds: Let $\e_j\to 0$. The sequence $A_{\e_j}(t)(\w)$ converges locally in time to $A(t)(\w)$, where $A(t)(\w)$ is the unique coordinate rectangle with sides $S_i(t)(\w)$ such that $A(0)(\w)=A(\w)$ and the side lengths $L_i(t)(\w)$ solve the following differential equations for all but countably many times:
	\begin{equation*}
	\frac{d}{dt}L_i(t)(\w)=-\frac{1}{\g}\left(v_{i-1}^{\text{eff}}(L_{i-1}(t)(\w))+v_{i+1}^{\text{eff}}(L_{i+1}(t)(\w)\right)
	\end{equation*}	
	with initial condition $L_{1}(0)(\w)=L_{1}^0(\w)$ and $L_2(0)=L_2^0$.
\end{crllr}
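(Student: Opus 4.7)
The plan is to combine Theorem \ref{mainmstatio}, which already produces a rectangular subsequential limit satisfying prescribed velocity inclusions, with an elementary ODE analysis in each of the three scenarios in order to show that the limit is uniquely determined. Since opposite sides of a coordinate rectangle share the same length, the two-unknown system $(L_1(t)(\w), L_2(t)(\w))$ describes the full dynamics, and it suffices to verify that the inclusions reduce to the stated ODE system with a unique absolutely continuous solution.

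First I would dispatch Case (i): by Lemma \ref{properties}(b), $v_i^{\text{eff}}(L_i^0(\w)) = 0$ for every $i$, so the velocity inclusions of Theorem \ref{mainmstatio} force every side to be frozen initially. Since the lengths are monotone decreasing in $t$, Lemma \ref{properties}(b) keeps $v_i^{\text{eff}}(L_i(t)(\w)) = 0$ for all later $t$, hence $A(t)(\w) = A(\w)$. In Case (ii), the hypotheses together with Lemma \ref{properties}(b) imply $v_i^{\text{eff}}(L_i^0(\w)) > 0$ for every $i$, and monotonicity (Lemma \ref{properties}(c)) keeps each length strictly below its pinning threshold for as long as it is positive. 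Since each $v_i^{\text{eff}}$ is piecewise constant with jumps only on the discrete set $\mathcal{S}_i$ (Lemma \ref{properties}(a)), and since each $L_i(\cdot)(\w)$ is Lipschitz and monotone decreasing, the trajectory meets $\bigcup_i \mathcal{S}_i$ at only countably many times. Between consecutive such times the ODE has constant right-hand side and a unique linear solution; concatenating these pieces yields the unique global solution until extinction. Case (iii) is analogous: the inequalities $L_1^0(\w) > \overline{L}_1$ and $L_2^0(\w) > \overline{L}_4 \geq \overline{L}_2$ give $v_1^{\text{eff}}(L_1^0(\w)) = v_2^{\text{eff}}(L_2^0(\w)) = v_4^{\text{eff}}(L_2^0(\w)) = 0$, while $L_1^0(\w) < \overline{L}_3$ together with $L_1^0(\w) \notin \mathcal{S}_3$ yields a unique positive value $v_3^{\text{eff}}(L_1^0(\w))$. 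Consequently $L_1$ stays fixed at $L_1^0(\w)$ and $L_2$ shrinks at constant rate $v_3^{\text{eff}}(L_1^0(\w))/\g$ until $L_2(t)(\w)$ drops to $\overline{L}_4$ (or hits a singular value), after which the analysis of Case (ii) applies.

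The main obstacle is that the inclusions of Theorem \ref{mainmstatio} are genuinely multi-valued at times $t$ for which $L_i(t)(\w) \in \mathcal{S}_{i \pm 1}$ or $L_i(t)(\w) = \overline{L}_{i \pm 1}$, so a priori the subsequence in Theorem \ref{mainmstatio} might select different trajectories. The decisive observation is that $\bigcup_i (\mathcal{S}_i \cup \{\overline{L}_i\})$ is a discrete subset of $(0,+\infty)$ while the $L_i(\cdot)(\w)$ are monotone Lipschitz, so the set of exceptional times is at most countable; on its complement the ODE holds in the classical sense, and monotonicity together with the piecewise-constant structure of the effective velocity pins down the unique piecewise-linear trajectory through each singular time. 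This yields a single limit $A(t)(\w)$, independent of the extracted subsequence, which establishes the corollary.
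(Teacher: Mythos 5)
Your proposal is correct and follows essentially the same route as the paper: cases (i) and (ii) via the pinning/shrinking dichotomy, and case (iii) by observing that $S_3$ moves with velocity bounded away from zero so that $L_2$ (and, once $L_2<\overline{L}_4$, also $L_1$) is strictly decreasing, whence the singular set $\bigcup_i\mathcal{S}_i$ is met only at countably many times and the piecewise-constant ODE has a unique solution between them, forcing full-sequence convergence. One tiny slip: in case (i) the reason the velocities stay zero is not that the lengths decrease (decreasing lengths would move \emph{toward} the unpinned regime) but that zero velocity keeps the lengths constant, hence above the thresholds.
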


\begin{proof}
	(i) and (ii) can be proven as in Theorem 3.2 in \cite{BGN}. In Case (iii) note that the side $S_3$ moves inward with a strictly positive velocity bounded away from $0$. Hence $L_2(t)(\w)$ is strictly decreasing until it vanishes. Consequently $L_2(t)(\w)\in\mathcal{S}_2\cup\mathcal{S}_4$ only for countably many times. Moreover, as soon as $L_2(t)(\w)<\overline{L}_4$ also the side length $L_1(t)(\w)$ shrinks strictly since from that time on the side $S_4$ moves inward with positive velocity. Hence the times when $L_1(t)(\w)\in\mathcal{S}_1\cup\mathcal{S}_3$ are discrete, too. Note that by continuity, the values at the critical times are uniquely defined. In between these critical times, one can use general results from ODE-theory to obtain that the rectangular motion is unique. The particular form of the ODE describing the motion is a straightforward consequence of Theorem \ref{mainmstatio}.	
\end{proof}

\subsection{An outlook for possible homogenization in time}\label{lastexample}
In this last section we show that, under certain assumptions, the random field considered in Example \ref{nonconv} exhibits an averaged velocity as well. We don't aim at giving results in full detail since Example \ref{nonconv} only serves as a toy model and the case of only stationary perturbations seems much more involved and we are not sure if homogenization can be proved. 

To be precise we give a hint how to generalize Lemma \ref{homogenvelocity} in a probabilistic way. Note that by construction of the perturbations in Example \ref{nonconv} there is nothing left to prove for horizontal sides since we may apply Proposition \ref{allconvergence}. For vertical sides we first indicate some possible uniqueness issues. For the moment let us neglect the error terms in (\ref{errors}). Denoting by $x_j=x_j(\w)$ the $x$-component of $s_{i,\e_j}(\w)/\e_j$, for left vertical sides we have to minimize
\begin{equation*}
v_{i,\e_j}^{\w}(N)=-2N+\frac{l_{i,\e_j}}{2\g}(N+1)N+l_{i,\e_j}X_{\lfloor x_j+N\rfloor}(\w).
\end{equation*}
Let us take a closer look at the non-uniqueness of minimizers. Again we set $N_j^*=\lfloor 2\g/l_{i,\e_j}\rfloor$ and $k_j:=\lfloor x_j+N^*_j\rfloor$. Writing $N^*_j=2\g/l_{i,\e_j}-\xi_j$, as in (\ref{charmin}) we deduce that minimizers are not unique if
\begin{equation*}
\xi_j\in\left\{\g(X_{k_j}(\w)-X_{k_j-1}(\w)),1+\g(X_{k_j+1}(\w)-X_{k_j}(\w))\right\}.
\end{equation*}

Without any further assumptions, as $j$ varies this set can be dense in a whole interval as the following example shows.

\begin{xmpl}
	Let $X_k$ be uniformly distributed on the interval $(0,1)$. Then, by independence, for every $k$ the random variable $Y_k=X_k-X_{k-1}$ has a triangular distribution on $(-1,1)$. Hence the sequence $\{Y_{2k}\}_k$ is an independent and identically distributed sequence of random variables. Then
	\begin{equation*}
	\mathbb{P}\left(\w:\;\{Y_{2k}(\w)\}_k\text{ is not dense in }(-1,1)\right)=0.
	\end{equation*}
	Indeed, given $q\in\mathbb{Q}\cap(-1,1)$ and $n\in\mathbb{N}$, from independence we infer
	\begin{equation*}
	\mathbb{P}\left(\w:\;Y_{2k}(\w)\notin q+(-\frac{1}{n},\frac{1}{n})\quad\forall k\right)=0.
	\end{equation*}
\end{xmpl}
This example indicates that a precise analysis for the limit velocity is quite difficult (of course one has to take into account also the error terms (\ref{errors})). Instead, if we assume that $X_k$ takes only finitely many values, then the set of side lengths $\mathcal{S}$ where the minimization problem has not a unique solution is again discrete. We now give a formal argument how one can treat this case. For side lengths with unique corresponding minimizers, we can indeed neglect (\ref{errors}). Moreover, by the same topological argument used in the proof of Lemma \ref{properties}, the minimization does not depend on the particular side length in one interval contained in $(0,+\infty)\backslash\mathcal{S}$. However, in contrast to the mixing case, the choice of minimizers is still random. Given $L\notin\mathcal{S}$ we have to consider the following (now random) algorithm: Given a starting point $P_0=x_j$,
\\
for $l=0,1,...$ set
\begin{enumerate}
	\item[] $n_l(\w):=P_l(\w)-\frac{1}{2},$
	\item[] $N_{l+1}(\w)=\underset{N}{\text{argmin }}\left\{-2N+\frac{L}{2\g}(N+1)N+LX_{n_l(\w)+N}(\w)\right\},$
	\item[] $P_{l+1}(\w):=P_l(\w)+N_{l+1}(\w).$
\end{enumerate}

Note that if $L$ is below the corresponding pinning threshold (which can easily be estimated since the random variables take only finitely many values), then the sequence $\{N_l\}_l$ is identically distributed and has a finite range dependence. Therefore, by the strong law of large numbers, almost surely we have

\begin{equation*}
\lim_{l_1\to+\infty}\frac{1}{l_1}\sum_{l=1}^{l_1}N_l(\w)=\mathds{E}[N_0].
\end{equation*}

Note that the limit does not depend on $x_j$. Moreover, as a trivial remark we can make the exceptional set independent of the starting position $x_j$. In order to prove that $\mathds{E}[N_0]$ is, up to a multiplicative constant, the velocity of the left vertical side, we need to control the speed of convergence independently of $x_j$. This can be achieved by defining finitely many stochastic processes (for a fixed limit side) similar to the proof of Proposition \ref{convergencem} using the fast decay of error probabilities due to finite range dependence. Since we only want to give a possible outlook we don't go into details here. Finally one can argue as in the proof of Theorem \ref{mainmstatio} and pass to the limit in the integral in (\ref{passlimit}). We leave the computation to the interested reader.

\subsection*{Acknowledgement}
The author thanks Marco Cicalese for suggesting to look at geometric minimizing movements in random media and for useful advices while writing this paper.	


\bibliographystyle{plain}

\end{document}